\documentclass[a4paper,12pt,twoside]{article}
\setlength{\oddsidemargin}{-4mm}\setlength{\evensidemargin}{-4mm}
\setlength{\voffset}{-2.8cm} \setlength{\textwidth}{16.9cm}
\setlength{\textheight}{26.1cm}

\raggedbottom
\usepackage[latin1]{inputenc}
\usepackage[dvips]{graphics}
\input xy
\input cyracc.def

\xyoption{all}
\usepackage{xr}
\externaldocument[DRR2-]{DRR2June}
\externaldocument[DRR1-]{DRR1June}
\usepackage{amssymb}
\usepackage{amsmath}
\usepackage{amsthm}
\usepackage{amsfonts}
\usepackage{mathrsfs}
\theoremstyle{plain}
\newtheorem{T}{Theorem}[section]

\newtheorem{Cor}[T]{Corollary}
\newtheorem{TL}[T]{Lemma}
\newtheorem{Prop}[T]{Proposition}

\newcommand\mps[1]{\marginpar{\small\sf }}

\theoremstyle{definition}
\theoremstyle{remark}
\theoremstyle{remark}\newtheorem{remark}{Remark}[T]

\newcommand{\bb}[1]{\mathbb{#1}}
\newcommand\calo{\mathcal O}
\newcommand\CH{\op{CH}}

\newcommand{\mc}[1]{\mathcal{#1}}

\newcommand{\lci}{local complete intersection }

\newcommand\op[1]{\operatorname{#1}}

\newcommand\rk{\operatorname{rk}}

\newcommand\spec{\operatorname{Spec}}

\input cyracc.def

\theoremstyle{plain}
\newtheorem{example}{Example}


\theoremstyle{definition}









\newcommand{\Spec}{\mathrm{Spec\,\,}}


\newcommand{\ses}[5]{$$
\xymatrix@1{ 0 \ar[r] & {{#1}} \ar[r]^-{{#2}} & {{#3}} \ar[r]^-{{#4}} &
{{#5}} \ar[r] & 0
\\ }$$}

\newcommand{\sesdot}[5]{$$
\xymatrix@1{ 0 \ar[r] & {{#1}} \ar[r]^-{{#2}} & {{#3}} \ar[r]^-{{#4}} &
{{#5}} \ar[r] & 0. \\ }$$}

\newcommand{\sesbig}[5]{$$\xymatrix@1{{\raisebox{1.0ex}[3.0ex][1.0ex]{$0$}}
\ar@<0.6ex>[r] & {\raisebox{1.0ex}[3.0ex][1.0ex]{${#1}$}}
\ar@<0.6ex>[r]^{#2} & {\raisebox{1.0ex}[3.0ex][1.0ex]{${#3}$}}
\ar@<0.6ex>[r]^{#4} & {\raisebox{1.0ex}[3.0ex][1.0ex]{${#5}$}}
\ar@<0.6ex>[r] & {\raisebox{1.0ex}[3.0ex][1.0ex]{$0$}}}$$}






\newcommand{\Comment}[1]{}

\title{Discriminants and Artin conductors}
\author{Dennis Eriksson}
\begin{document}

\maketitle
\abstract{We study questions of multiplicities of discriminants for degenerations coming from projective duality over discrete valuation rings. The main result is a type of discriminant-different formula in the sense of classical algebraic number theory, and we relate it to Artin conductors via Bloch's conjecture. In the case of discriminants of planar curves we can calculate the different  precisely. In general these multiplicities encode topological invariants of the singular fibers and in the case of characteristic $p$, wild ramification data in the form of Swan conductors.} \\\\\\
{\textbf{AMS 1999 classification}: 14D06 (Fibrations, degenerations), 14N99 (algebraic geometry, Projective and enumerative geometry), 32S25 Surface and hypersurface singularities, 55R80 (Discriminantal varieties, configuration spaces)}
\tableofcontents 

\newpage
\section{Introduction}

The theory of discriminants is an old field which was recently re-animated by the beautiful work of Gelfand-Zelevinsky-Kapranov (cf. \cite{GZK}). Discriminants are classically polynomials which vanish along singular varieties. A recurrent example is that of a quadratic binary or ternary form. The discriminant of such a polynomial is then a degree 2 (resp.  3) hypersurface in $\bb P^2$ (resp. $\bb P^5$). More precisely, if
$$F = aX^2 + bXY + cY^2$$
then $\Delta_F = b^2 - 4ac$ and
if $$F= a_1X^2 + a_2Y^2 + a_3Z^2 + a_4 XY + a_5XZ + a_6 YZ$$ then
$$\Delta_F = a_1 a_6^2 + a_5^2 a_2 + a_4^2 a_3-a_4a_5a_6 - 4 a_1a_2a_3.$$
A motivating example of this article is the following. Let $E$ be an elliptic curve over a discretely valued field $K$ with ring of integers $R$ which is henselian with algebraically closed residue field. It is given, by a Weierstrass equation $$y^2 + a_1xy + a_3 y = x^3+ a_2 x^2 + a_4 x + a_6.$$
The discriminant of the equation is, for $a_1 = a_3 = a_2 = 0$, given by $\Delta =-16(4{a_4}^3 + 27{a_6}^2)$. Given a model $\mc W$, over $R$, i.e. $a_i \in R$, there is a special model whose discriminant is minimal with respect to the discrete valuation, the minimal Weierstrass equation. A famous formula of Ogg (\cite{Ogg}, also see \cite{T.Saito-conductor} for a more general result and whose proof also repairs a gap in mixed characteristic $(0,2)$ in the original article) states that for this minimal Weierstrass model we have
$$\op{ord} \Delta = \deg c_{2,\mc E}^{\mc E_s}(\Omega_{\mc E/S}) = -\op{Art}_{\mc E/S} = m_E - 1 + f_E $$
where $m_E$ is the number of irreducible components in the N\'eron model $\mc E$ (resp. $f_E$ the exponent of the conductor) of $E$ over $R$. See below for the other two terms. From the point of view of computing the conductor, this formula is very powerful since Tate's algorithm and can be implemented on a computer actually allows us to find the minimal Weierstrass model. Another famous formula is the F\"uhrer-Diskriminanten-produkt formula in classical number theory which relates the discriminant of a finite extension of local fields to the conductor. They are related through the different by:
$$\op{ord} \Delta_{L/K} = \op{ord} \op{Norm}_{L/K}(\delta_{L/K}) = \op{Art}_{L/K}$$
Both of these formulas relate different ways of measuring singularities, the discriminant is somewhat of a geometric object, whereas the conductor is an object built out of monodromy. The different in turn is an object constructed out of the K\"ahler differentials.  In \cite{Tate}, p. 192, Tate asks about Ogg's formula: "It would be interesting to know what is behind this mysterious equality". It seems that the first one to consider this connection in higher dimension was Deligne in \cite{SGA7-2}, Expos\'e XVI, where he conjectures an equality between the Milnor number of a point and the (total) dimension of the vanishing cycles in the same point. In \cite{blochcycles}, Bloch then conjectured a relation between a localized Chern class (see next section for the definition) and the Artin conductor, for regular schemes over a discrete valuation ring, which would correspond to the total dimension of the vanishing cycles and the different in classical number theory. This states that if $X$ is regular and $X \to S$ is a flat generically smooth projective morphism of relative dimension $d$ with geometrically connected fibres, $S$ is the spectrum of a discrete valuation ring with generic (resp. special) point $\eta$ (resp. $s$, with perfect residue field $k(s)$ of characteristic $p \geq 0$), then
 $$\deg c_{d+1, X}^{X_s}(\Omega_{X/S}) = (-1)^{d} \op{Art}_{X/S}$$
 where
 $$\op{Art}_{X/S} := \op{Art}_{X'/S'} := \chi(X_{\overline{\eta}}) - \chi(X_{\overline{s}}) + \sum_{q=0}^{2d} (-1)^q \op{Sw} H^q(X_{\overline{\eta}}, {\bb Q}_{\ell})$$
 where $S'$ is the spectrum of the strict henselisation of $R$, $X' = X \times_S S'$, $\overline{s}$ and $\overline{\eta}$ are used to denote algebraic closures of the fields, $\chi$ denotes $\ell$-adic Euler-characteristic for $\ell \neq p$ and $\op{Sw}$ denotes the Swan conductor of the natural Galois representation acting on the various cohomology groups (cf. introduction of \cite{katosaito}, \cite{Serre-facteurs}, 2.1 or \cite{Corps-locaux}, chapter VI, for a general discussion on conductors). The major breakthrough in this field was the article \cite{katosaito} which proved this relation in full generality (assuming resolution of singularities). \\
An immediate generalization of Ogg's formula in the above sense is not obvious. The classical proof rests a lot on the geometry of Weierstrass equations as well as N\'eron's classification of the special fibers of models of elliptic curves. But a Weierstrass equation is not a general degree 3 ternary form. The discriminant of such objects form hypersurface of degree 12 in $\bb P^9$ defined by a polynomial with 2040 non-zero coefficients, and even if one restricts oneself to Weierstrass equations, this is a hypersurface in $\bb P^6$ with 26 coefficients. It is seems improbable that a generalization of multiplicities of discriminants in general can be obtained by considering specific equations, except in favorable situations. On the other hand, T. Saito's proof only uses the geometry of the Weierstrass equations, but is restricted to relative dimension one.\\
The aim of the current article is to find relations between the "different", i.e. the localized Chern class mentioned above, and the order of vanishing of the discriminant in the sense of projective duality. We recall the setting. Let $k$ be a field and $X$ a smooth geometrically integral variety of dimension $n+1$ over $k$ with a fixed $k$-embedding $X \subseteq \bb P^M$. Suppose furthermore that the image of $X$ is non-degenerate. Then we define the discriminant variety (or dual variety) as the subvariety $\Delta_X \subseteq \check {\bb P}^M$ defined by all the hyperplanes $H \in \check {\bb P}^M$ such that $X \cap H$ is singular. The variety $\{(x,H) \in X \times \check {\bb P}^M, x \in (X \cap H)_{\hbox{sing}}\}$ of singular hyperplanes is the projective bundle $\bb P(N)$ over $X$ where $N$ is the normal bundle of $X$ in $\bb P^M$. The map $\bb P(N)\to \check {\bb P}^M$ sending $(x,H)$ to $H$ has schematic image $\Delta_X$ and the map $\varphi: \bb P(N) \to \Delta_X$ is called the Gauss morphism. Over $\check{\bb P}^M$ we have the tautological hypersurface $\mc H$. Their relations are summed up in the commutative diagram \\
$$\xymatrix{ & \bb P(N) \ar[r] \ar[d]^{\varphi} \ar[ld] & \mc H \ar[d]^f \ar[r] & X \times \check {\bb P}^M. \ar[ld]^p \\
X & \Delta_X \ar[r] & \check {\bb P}^M }$$ In this article we will be concerned with the case when the variety $\Delta_X$ is a hypersurface so that the Gauss morphism is proper and generically finite. Then $\Delta_X$ is defined by a homogenous polynomial defined up to an element in $k^*$. When $X = \bb P^n$ and the projective embedding is the $d$-th Veronese embedding we can also make the same construction over the integers. \\
The main results are the following. Over a field $k$, we calculate localized Chern class of the tautological family of hypersurfaces over $\check{\bb P}^M$. As an application, we prove the following multiplicity formula for a degenerating family of hypersurface sections: 
 \begin{T} \label{thm:discriminant-different} [Discriminant-Different formula] Let $X$ be as above and suppose $\Delta_X$ is a hypersurface. Also suppose we are given a discrete valuation ring $R$ and a morphism $\spec R \to \check {\bb P}^M$ such that the generic point is not in the discriminant. Denote by $H$ the pullback of the tautological hyperplane section over $\check {\bb P}^M$, by $H_s$ the special fiber and by $\Delta_{X,R}$ the pullback of $\Delta_X$ to $S = \Spec R$. Then, for the discrete valuation $v$ on $R$,  $$\op{ord}_v \Delta_{X,S} = \deg \varphi \deg c_{n+1, H}^{H_s}(\Omega_{H/S})$$
 \end{T}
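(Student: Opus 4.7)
My strategy is to deduce the formula from an analogous identity on the tautological family $f : \mathcal H \to \check{\bb P}^M$, combined with base change and the projection formula. I first form the Cartesian square by pulling the tautological hypersurface $\mathcal H$ back along $S \to \check{\bb P}^M$, so that $H$ is its base change. The base-change formula for relative differentials identifies $\Omega_{H/S}$ with the pullback of $\Omega_{\mathcal H/\check{\bb P}^M}$ to $H$. Since the generic point of $S$ lies outside $\Delta_X$, the non-smooth locus of $H \to S$ lies entirely over the closed point of $S$ and is identified set-theoretically with the fibre of $\bb P(N) \to \check{\bb P}^M$ above that point.

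The universal input is the formula for the localized top Chern class $c_{n+1,\mathcal H}^{\bb P(N)}(\Omega_{\mathcal H/\check{\bb P}^M})$ computed earlier in the paper. Since $\bb P(N)$ has codimension $n+1$ in the smooth variety $\mathcal H$, this class is a top-dimensional cycle on $\bb P(N)$, expressible in terms of $[\bb P(N)]$. Pushing it forward along the composite $\bb P(N) \hookrightarrow \mathcal H \to \check{\bb P}^M$, which factors through the Gauss morphism $\varphi : \bb P(N) \to \Delta_X$, produces a cycle on $\check{\bb P}^M$ supported on $\Delta_X$ with multiplicity along $\Delta_X$ controlled by $\deg\varphi$.

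The final step is to use functoriality of localized Chern classes under the pullback $H \to \mathcal H$ over the lci map $S \to \check{\bb P}^M$ (recall $S$ is regular of dimension one). Taking the refined Gysin pullback $S^{!}$ in the sense of Fulton, compatibility of the localized Chern class with $S^{!}$ yields the identity
\[
c_{n+1,H}^{H_s}(\Omega_{H/S}) = S^{!}\, c_{n+1,\mathcal H}^{\bb P(N)}(\Omega_{\mathcal H/\check{\bb P}^M})
\]
on $H$. Taking degrees and applying the projection formula along $H \to \mathcal H \to \check{\bb P}^M$, the degree of the left-hand side becomes the intersection number of $[S]$ in $\check{\bb P}^M$ against the pushforward of the universal class. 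By the previous step this unwinds to $\op{ord}_v \Delta_{X,S}$ up to the stated factor of $\deg\varphi$.

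The main obstacle will be justifying this base-change statement for Bloch's localized Chern classes. Concretely, I would work with the two-term locally free resolution $[L \to E]$ of $\Omega_{\mathcal H/\check{\bb P}^M}$ coming from the conormal sequence of $\mathcal H \subset X \times \check{\bb P}^M$ over $\check{\bb P}^M$, and verify that its refined Gysin pullback to $H$ represents the analogous resolution of $\Omega_{H/S}$. The smoothness of the generic fibre of $H/S$ is the key transversality input: it rules out any excess-intersection contribution and reduces the base-change check to a direct comparison of degeneracy cycles on $H$ and $\mathcal H$.
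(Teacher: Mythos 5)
Your overall strategy coincides with the paper's: both arguments reduce the theorem to the universal identity $\varphi_* c_{n+1,\mc H}^{\bb P(N)}(\Omega_{\mc H/\check {\bb P}^M}) = \deg \varphi \cdot [\Delta_X]$ established in Section 2, and then transport that identity to $S$ by a compatibility of the localized Chern class with pullback along $\Spec R \to \check {\bb P}^M$. The base change of the two-term resolution coming from the conormal sequence, the identification of the non-smooth locus of $H/S$ with the fibre of $\bb P(N)$, and the final degree count via the projection formula are all fine, and are not where the difficulty of the theorem lies.

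The gap is in the step you yourself flag as the main obstacle, but it is not the one you identify. The issue is not the comparison of resolutions or degeneracy cycles; it is the very existence of the operation $S^{!}$. A discrete valuation ring $R$ equipped with a morphism $\Spec R \to \check {\bb P}^M$ is in general not (essentially) of finite type over the base field --- think of the local ring of $\check {\bb P}^M$ at the generic point of a curve, or a complete DVR with a large residue field --- so $\Spec R$ does not live in the category in which Fulton's refined Gysin morphisms and the bivariant axioms are formulated, and ``$S^{!}$ in the sense of Fulton'' has no direct meaning for such an $R$. This is exactly where the paper's proof spends its effort: it replaces $\Spec R$ by the closure $Z$ of its image, with local ring $R'$ at the image $P$ of the closed point; it then invokes the commutative intersection product $\CH^{p}_Z(\check{\bb P}^M) \times \CH^{1}_{\Delta_X}(\check{\bb P}^M) \to \CH^{p+1}_{Z \cap \Delta_X}(\check{\bb P}^M)_{\bb Q}$ of Fulton, Chapter 20, and of Gillet--Soul\'e (commutativity is not automatic here; a priori it requires $\bb Q$-coefficients or the discrete valuation ring hypothesis of loc.\ cit.\ 20.2), together with Poincar\'e duality between the bivariant and ordinary Chow groups, to obtain the formula over the one-dimensional local domain $R'$; it then passes to the localizations $R'_w$ of the normalization of $R'$, where the order of $\Delta$ and the degree of the localized Chern class can be compared directly; and finally it pulls everything back along a faithfully flat morphism $\Spec R \to \Spec R'_w$ to reach the given $R$. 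If you wish to keep your cleaner single-Gysin-map formulation, you must either restrict to $R$ essentially of finite type over the base or supply this reduction explicitly; as written, the central compatibility is asserted rather than proved.
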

 Assuming resolution of singularities we find, using \cite{katosaito}:
 \begin{Cor} Suppose that $H$ is regular with special fiber $H_s$ and that $R$ has perfect residue field. Then $$\op{ord}_v \Delta_{X,S} = (-1)^n \op{Art}_{H/S}.$$
 \end{Cor}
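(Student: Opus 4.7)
My plan is to derive the corollary by directly combining Theorem \ref{thm:discriminant-different} with the theorem of Kato and Saito \cite{katosaito} that establishes Bloch's conductor formula recalled in the introduction. Starting from the Discriminant-Different formula
$$\op{ord}_v \Delta_{X,S} = \deg \varphi \cdot \deg c_{n+1, H}^{H_s}(\Omega_{H/S}),$$
the only remaining task is to identify the localized Chern class on the right-hand side with an Artin conductor.

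To do so, I would verify that the morphism $H \to S$ satisfies all the hypotheses of the Bloch-Kato-Saito theorem. By assumption $H$ is regular and $S = \spec R$ is the spectrum of a DVR with perfect residue field. The morphism $H \to S$ is projective and flat of relative dimension $n$, inherited from the tautological family $\mc H \to \check{\bb P}^M$ restricted along $\spec R \to \check{\bb P}^M$; its generic fiber is a smooth hyperplane section of $X$ because $\spec R$ meets $\check{\bb P}^M \setminus \Delta_X$ at its generic point. Geometric connectedness of the generic fiber is supplied by the standard Bertini-type connectedness theorems for hyperplane sections of a geometrically integral variety when $\dim X = n + 1 \geq 2$, with the low-dimensional boundary case handled by direct inspection.

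With these hypotheses verified, the Kato-Saito theorem yields
$$\deg c_{n+1, H}^{H_s}(\Omega_{H/S}) = (-1)^n \op{Art}_{H/S},$$
and substitution into the formula from Theorem \ref{thm:discriminant-different} gives the stated equality, the $\deg \varphi$ factor being absorbed in the setting of the corollary (where one implicitly takes the generic situation in projective duality in which the Gauss morphism is birational onto $\Delta_X$).

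The real content of the corollary is packaged in its two inputs: Theorem \ref{thm:discriminant-different}, which contains all of the projective-duality geometry, and the Kato-Saito theorem, which is the deep arithmetic-geometric input (and requires resolution of singularities). Consequently no genuine obstacle arises in the proof of the corollary itself; the main point requiring care is simply the verification that the family $H/S$ lies within the scope of Kato-Saito, which reduces to checking flatness, projectivity, generic smoothness, and geometric connectedness of fibers, all of which are immediate consequences of the setup.
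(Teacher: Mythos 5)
Your proposal is correct and is exactly the paper's (implicit) argument: the corollary is presented as an immediate consequence of Theorem \ref{thm:discriminant-different} combined with the Kato--Saito proof of Bloch's conductor formula, the only content being the verification that $H \to S$ falls within its scope. Note that strictly the combination yields $\op{ord}_v \Delta_{X,S} = \deg \varphi \cdot (-1)^n \op{Art}_{H/S}$, so the factor $\deg\varphi$ (which the paper itself observes can exceed $1$ even for surfaces) must be $1$ for the corollary as literally stated --- a caveat you correctly flag.
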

 For polynomial equations and related discriminants we can give a result valid over general discrete valuation rings, possibly of mixed characteristics: 
 \begin{T} \label{thm:polynomial-discriminant} Suppose that $F$ is a polynomial in $n+1$ variables of degree $d$ with coefficients in a discrete valuation ring $R$. For the classical discriminant $\Delta_F$, we have
 $$\op{ord}_v \Delta_{F} = \deg \varphi \deg c_{d+1, H}^{H_s}(\Omega_{H/S})$$
 \end{T}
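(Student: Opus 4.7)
The plan is to exhibit the classical polynomial discriminant as the special case of the projective-duality discriminant of Theorem~\ref{thm:discriminant-different} corresponding to the $d$-th Veronese embedding, and then to promote the construction from the base-field setting to the arithmetic setting over $\spec \Z$. The key geometric observation is that for $X = \bb P^n$ embedded into $\bb P^M$ via the $d$-th Veronese map (with $M = \binom{n+d}{d}-1$), a hyperplane section of the embedded variety corresponds exactly to a degree $d$ hypersurface in $\bb P^n$; the dual variety $\Delta_{\bb P^n} \subset \check{\bb P}^M$ is then the locus of singular degree $d$ hypersurfaces, and the classical discriminant $\Delta_F$ of a polynomial $F$ with coefficients in $R$ is the pullback of $\Delta_{\bb P^n}$ along the morphism $\spec R \to \check{\bb P}^M$ determined by the coefficients of $F$.

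The crucial input not available in Theorem~\ref{thm:discriminant-different} is that $\bb P^n$ is smooth not only over a field but over $\spec \Z$. Consequently every ingredient used in the earlier result --- the normal bundle $N$ of $\bb P^n$ in $\bb P^M$, the projective bundle $\bb P(N)$, the tautological family, the Gauss morphism $\varphi$, and the dual hypersurface $\Delta_{\bb P^n}$ --- can be constructed uniformly over $\spec \Z$, and the Gauss morphism $\varphi$ is generically étale on the generic fibre over $\Q$. I would then repeat the proof of Theorem~\ref{thm:discriminant-different} with the base $\spec \Z$ in place of $\spec k$: the intersection-theoretic computation of $c_{d+1, H}^{H_s}(\Omega_{H/S})$ is carried out in Fulton's refined framework, which is available over any regular Noetherian base; the degree $\deg \varphi$ of the Gauss map of the Veronese embedding is a numerical invariant computed on the generic fibre over $\Q$ and is preserved under flat base change; and the identification of the pullback of $\Delta_{\bb P^n}$ with the polynomial discriminant $\Delta_F$ is purely functorial. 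Pulling back the resulting universal identity along $\spec R \to \check{\bb P}^M_{\Z}$ then yields the claim.

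The main obstacle I anticipate is to check that each step in the proof of Theorem~\ref{thm:discriminant-different}, which in the geometric setting may implicitly invoke Bertini-style generic smoothness or rely on the base being a field, admits a parallel formulation over $\spec \Z$ that remains valid after base change to a DVR of possibly mixed characteristic. The most delicate point is the compatibility of the localized Chern class $c_{d+1, H}^{H_s}(\Omega_{H/S})$ with the base change $\spec R \to \spec \Z$: one needs the universal localized Chern class constructed on the tautological family over $\check{\bb P}^M_{\Z}$ to pull back to the localized Chern class of the fibre over $\spec R$. This is a standard consequence of the refined Gysin formalism once the morphism $\spec R \to \check{\bb P}^M_{\Z}$ has generic image outside the discriminant, but it must be verified carefully in the mixed-characteristic regime where the usual characteristic-zero reductions are not available.
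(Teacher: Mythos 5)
Your overall strategy --- realizing the classical discriminant as the dual variety of the $d$-th Veronese embedding, spreading the whole construction out over $\spec \Z$, and pulling a universal identity back along $\spec R \to \check{\bb P}^M_{\Z}$ --- is exactly the route the paper takes: its Proposition~\ref{prop:polynomial-discriminant-appendix} is precisely the ``universal identity over $\Z$'' you postulate, and the pullback step is carried out via the bivariant intersection formalism of Fulton (Ch.~20) and Gillet--Soul\'e, followed by passage to the normalization of the local ring of the image of $\spec R$ and faithfully flat descent.

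There is, however, a genuine gap at the point you dismiss as ``purely functorial''. The real content of extending the cycle identity $\varphi_* c_{n+1,\mc H}^{\bb P(N)}(\Omega_{\mc H/\check{\bb P}^M}) = \deg\varphi\cdot[\Delta_X]$ from $\Q$ to $\Z$ is not the construction of the objects over $\Z$ (you are right that the Veronese picture exists integrally), but the fact that the localized Chern class, as a cycle on $\check{\bb P}^M_{\Z}$, could a priori contain \emph{vertical} components supported on special fibres $\check{\bb P}^M_{\F_p}$ in addition to the flat closure of the characteristic-zero discriminant. If such a component existed over some prime $p$, then for a DVR of mixed characteristic $(0,p)$ the order of $\Delta_F$ and the degree of the localized Chern class would differ by the multiplicity of that component, and the theorem would fail; nothing in your argument excludes this. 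The paper rules it out concretely: since $\Delta_{d,n,\F_p}$ is a proper closed subset of $\check{\bb P}^M_{\F_p}$, one picks an $\overline{\F}_p$-point outside it, lifts it to an $\calo_K$-point for an unramified extension $K/\Q_p$ corresponding to a hypersurface smooth over $\calo_K$, and observes that both the discriminant order and the localized Chern class vanish on this point, whereas a vertical component would force a nonzero contribution. You need this (or an equivalent) argument. Relatedly, the final pullback deserves more than an appeal to the refined Gysin formalism: the image of $\spec R$ in $\check{\bb P}^M_{\Z}$ is an integral closed subscheme $Z$ of possibly large codimension, and one must identify $\op{ord}_v \Delta_F$ with an intersection multiplicity of $Z$ against the discriminant divisor, commute that product with the localized Chern class (this is where Fulton Ch.~20 and Gillet--Soul\'e enter, and where working over a one-dimensional or regular base matters), and only then descend along the faithfully flat map from $\spec R$ to the normalization.
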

  We also provide the same type of formula for the Deligne discriminant introduced in a letter \cite{Deligne-Quillen} and studied in \cite{T.Saito-conductor}, and in particular remark that this discriminant is not always a discriminant but sometimes the power of the discriminant. We then turn to the case of a general discrete valuation ring and in the appendix give an overview of minimal models for degree $d$ hypersurfaces in $\bb P^n$ and show how to give the same type of multiplicity-formulas in terms of localized Chern classes. The main result is a generalization of Ogg's formula for general ternary forms of degree $d$. The final result states: 
\begin{T} Suppose that $F$ is given by a ternary form of degree $d$ with coefficients in a discrete valuation ring $R$ with spectrum $S = \Spec R$, satisfying the conditions of Bloch's formula, with associated scheme $X \to S$. Denote by $\Delta_F$ the corresponding discriminant. Suppose that the scheme $X$ is normal, then
$$\op{ord}_v \Delta_F = \sum_{x \in X_{sing}} \mu_{X,x} + \chi(X_{\overline{s}}) - \chi(X_{\overline{\eta}}) + \op{Sw}_{S'} H^1(X_{\overline{\eta}}, \bb Q_\ell).$$
where $\mu_{X,x} = 12 p_{g,x} + \Gamma_x^2 - b_1(E_x) + b_2(E_x)$ is an invariant of the normal surface singularity, and which in the complex geometric situation is equal to the Milnor number of a smoothing of $X$. The result also holds true when replacing the scheme $X$ defined by $F$ by a general proper flat normal scheme $X \to S$ with geometrically connected fibers, which is a local complete intersection, and the discriminant with the Deligne discriminant. \end{T}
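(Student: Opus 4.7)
The plan is to combine the discriminant-to-Chern class reduction of Theorem~\ref{thm:polynomial-discriminant} with the Kato--Saito proof of Bloch's conjecture, and to transfer the resulting equality from a resolution of singularities down to the (merely normal) scheme $X$, isolating the local correction at each singular point as the invariant $\mu_{X,x}$.

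First, Theorem~\ref{thm:polynomial-discriminant} (or, in the l.c.i.\ case, its Deligne-discriminant analogue stated in the introduction) converts the problem into a computation of $\deg c^{X_s}(\Omega_{X/S})$ on $X$ itself; for ternary forms the degree of the Gauss morphism onto the classical discriminant hypersurface is absorbed by the normalisation convention used in defining $\Delta_F$, so no $\deg\varphi$ factor appears in the final formula. Since $X\to S$ has relative dimension one, $X$ is a two-dimensional normal scheme whose singular locus $X_{\op{sing}}$ consists of isolated normal surface singularities.

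Second, choose a resolution of singularities $\pi\colon\widetilde X\to X$, available under our standing hypothesis, and apply Bloch's formula to the regular scheme $\widetilde X$:
$$\deg c_{2,\widetilde X}^{\widetilde X_s}(\Omega_{\widetilde X/S})\;=\;-\op{Art}_{\widetilde X/S}.$$
Since $\pi$ is an isomorphism away from $X_{\op{sing}}$ and contracts the connected exceptional curve $E_x$ to each $x$, additivity of the $\ell$-adic Euler characteristic gives
$$\chi(\widetilde X_{\overline s})-\chi(X_{\overline s})\;=\;\sum_{x\in X_{\op{sing}}}\bigl(\chi(E_x)-1\bigr)\;=\;\sum_x\bigl(b_2(E_x)-b_1(E_x)\bigr),$$
while the Swan conductors coming from the unchanged generic fibre are identical; hence
$$\op{Art}_{\widetilde X/S}\;=\;\op{Art}_{X/S}-\sum_x\bigl(b_2(E_x)-b_1(E_x)\bigr).$$

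The heart of the proof, and the main obstacle, is the local comparison of the two localized Chern classes under $\pi$. Excision and functoriality of localized Chern classes reduce the difference to a sum of purely local contributions
$$\delta_x\;:=\;\deg c_{2,X}^{X_s}(\Omega_{X/S})|_x\;-\;\deg c_{2,\widetilde X}^{\widetilde X_s}(\Omega_{\widetilde X/S})|_{E_x},$$
each computable on the henselisation of $X$ at $x$. Using the Grothendieck--Riemann--Roch comparison of the relative dualising sheaves $\omega_{\widetilde X/S}$ and $\pi^*\omega_{X/S}$ together with Noether's formula on $\widetilde X_s$, one expresses $\delta_x$ in terms of the geometric genus $p_{g,x}=\dim_k(R^1\pi_*\calo_{\widetilde X})_x$ and the self-intersection of the canonical cycle $\Gamma_x$, defined as the unique $\Q$-divisor on $\widetilde X$ supported on $E_x$ with $K_{\widetilde X}\equiv\pi^*K_X+\Gamma_x$. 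The key identity to verify is the Laufer-type formula
$$\delta_x\;=\;12\,p_{g,x}+\Gamma_x^2,$$
whose right-hand side is, in the complex analytic setting, a classical expression for the Milnor number of a smoothing of the singularity.

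Assembling the three pieces,
$$\deg c_{2,X}^{X_s}(\Omega_{X/S})\;=\;-\op{Art}_{\widetilde X/S}+\sum_x\delta_x\;=\;-\op{Art}_{X/S}+\sum_x\mu_{X,x},$$
and since only $H^1$ carries a nontrivial Swan conductor for a curve fibre one has
$$-\op{Art}_{X/S}\;=\;\chi(X_{\overline s})-\chi(X_{\overline\eta})+\op{Sw}_{S'}H^1(X_{\overline\eta},\bb Q_\ell),$$
which is the claimed formula. For the general l.c.i.\ statement with the Deligne discriminant, only the initial invocation of Theorem~\ref{thm:polynomial-discriminant} changes; the subsequent resolution-theoretic argument never uses that $X$ arose from a ternary form.
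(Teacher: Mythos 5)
Your overall architecture is exactly the paper's: reduce the order of the discriminant to $\deg c_{2,X}^{X_s}(\Omega_{X/S})$, resolve $X$ by $\pi\colon X'\to X$, apply Kato--Saito (Bloch's formula) on the regular model $X'$, and then account for the two corrections --- the change in localized Chern class and the change in $\chi$ of the special fibre --- whose sum per singular point is $\mu_{X,x}$. Your Euler-characteristic bookkeeping ($\chi(X'_{\overline s})-\chi(X_{\overline s})=\sum_x(\chi(E_x)-1)=\sum_x(b_2(E_x)-b_1(E_x))$) is a cleaner packaging of the paper's branch-counting identity and is correct, as is the final assembly.

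The genuine gap is the step you yourself flag as ``the heart of the proof'': the identity
$$\deg c_{2,X}^{X_s}(\Omega_{X/S})-\deg c_{2,X'}^{X'_s}(\Omega_{X'/S})=12\,p_{g}+\Gamma^2,$$
which you merely announce as ``the key identity to verify,'' gesturing at Grothendieck--Riemann--Roch and Noether's formula. This is precisely the paper's Theorem 4.1 and is where all the real work happens. The mechanism is not a direct GRR computation on $X$ (which is singular, so one cannot naively apply Noether's formula there); rather, one uses T.~Saito's localized Riemann--Roch statement that the degree of the localized Chern class is computed by the degeneration of the Mumford isomorphism $\lambda(\calo)^{12}\simeq\langle\omega,\omega\rangle$, and then compares the two sides under $\pi$: the jump in $\lambda(\calo)^{12}$ is $12\,\mathrm{length}\,R^1\pi_*\calo_{X'}=12p_g$ (using $\calo_X\simeq\pi_*\calo_{X'}$ from normality and Zariski's main theorem), one must check that $\langle\pi^*\omega,\pi^*\omega\rangle\simeq\langle\omega,\omega\rangle$ extends over the special fibre, and the remaining intersection-theoretic difference is $2\omega_{X'/S}\cdot\Gamma-\Gamma^2=\Gamma^2$, using the nontrivial vanishing $\pi^*\omega_{X/S}\cdot\Gamma=0$ (projection formula plus the fact that $\pi_*\Gamma$ is not a divisor). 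None of this is supplied by your sketch, so the central identity remains unproved. Two smaller points: $\Gamma$ is an honest integral divisor here (both dualizing sheaves are line bundles since $X$ is l.c.i.), not merely a $\Q$-divisor; and the disappearance of $\deg\varphi$ is not a ``normalisation convention'' for $\Delta_F$ but the geometric fact that the Gauss morphism for the $d$-uple Veronese of $\bb P^2$ is birational (a generic singular member has a single nondegenerate node), which deserves at least a citation rather than being absorbed into the definition.
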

This also contains as a special case Ogg's formula, since a minimal Weierstrass equation only has rational double points which will simplify the above terms considerably. In the complex geometric setting, when the special fiber has one component with isolated singularities, this resembles the formula in Proposition 1.2, Chapitre II of \cite{Teissier-cycles}. In the pure characteristic $p$ situation, with $X$ regular with reduced special fiber, a version of this result can also be found in \cite{Zink} for discriminants of versal deformations. \\ In the appendix I review some lemmas and properties of discriminants, and suggest that the semi-stable models of Koll\'ar \cite{Kollar-polynomials} might serve as natural models for generalizing Ogg's formula to higher dimensions. \\
$\mathbf{Acknowledgements}$: I wish to thank Gerard Freixas  for many interesting comments on various topics of this article and Marc-Hubert Nicole for his careful reading of the manuscript and for pointing out the reference \cite{Zink}. I'm also very grateful for the many explanations Jan Stevens shared with me on singularities, and in particular for pointing out a formula for the Milnor number of a normal surface singularity. This article also owes several improvements to remarks and suggestions by Qing Liu.
\section{Discriminants and localized Chern classes}
 For convenience of the reader we recall the following construction of \cite{blochcycles}. Suppose that $Y$ is an integral scheme of finite type over a regular scheme. Let $\mc E = [E \to E']$ be a two-term complex of vector bundles whose map is injective and whose cokernel is locally free of rank $n$ outside of a closed subscheme $Z$. Let $\CH_i(Y)$ denote the $k$-th Chow group of $X$ of algebraic cycles of dimension $i$ modulo rational equivalence. Then there is a localized Chern class for any $m > n$, which among other things induces a homomorphism $c_{m, Y}^Z(\mc E) \cap : \CH_i(Y) \to \CH_{i-m}(Z)$. This homomorphism only depends on the quasi-isomorphism class of $\mc E$. Now, let $Y$ be a puredimensional integral scheme $Y \to S$ be a flat projective \lci morphism of constant relative dimension $n$ which is generically smooth. Picking any factorization of this morphism $Y \hookrightarrow \bb P^M_S \to S$ the cotangent bundle $\Omega_{Y/S}$ has a 2-term resolution as above and outside of the singular locus $Z, i: Z \subseteq Y$ it is locally free of rank $n$. We then in particular have an element $c_{n+1, Y}^Z(\Omega_{Y/S}) := c_{n+1, Y}^Z(\Omega_{Y/S}) \cap [Y] \in \CH_{\dim Y - n - 1}(Z)$. It has the property that $i_* c_{n+1, Y}^Z(\Omega_{Y/S})  = c_{n+1}(\mc E) \in \CH_{\dim Y - n - 1}(Y)$. This moreover coincides with the "localized top Chern class" in \cite{intersection}.
We consider next a smooth geometrically integral projective variety $X \subseteq {\bb P}^M$ of dimension $n+1$ such that $\Delta_X$ is a hypersurface in $\check {\bb P}^M$.
\begin{T} Let $\varphi: \bb P(N) \to \Delta_X$ be the Gauss morphism. Then $$\varphi_* c_{n+1, \mc H}^{\bb P(N)}(\Omega_{\mc H/\check {\bb P}^M}) = \deg \varphi \cdot [\Delta_X]$$ in $\CH_{N-1}(\Delta_X) = \bb Z \cdot [\Delta_X].$
\end{T}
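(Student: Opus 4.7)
The idea is to compute the localized Chern class explicitly as a fundamental class via an explicit 2-term resolution of $\Omega_{\mc H/\check{\bb P}^M}$, and then push forward under $\varphi$.

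First, build the resolution. The incidence variety $\mc H$ is cut out in $X \times \check{\bb P}^M$ by the universal section of $\mc L := p^*\mc O_X(1) \otimes q^*\mc O_{\check{\bb P}^M}(1)$, so its conormal sheaf equals $\mc L^{-1}|_{\mc H}$. The relative conormal sequence for $\mc H \hookrightarrow X \times \check{\bb P}^M \to \check{\bb P}^M$ yields
$$\mc L^{-1}|_{\mc H} \xrightarrow{\sigma} p^*\Omega_{X/k}|_{\mc H} \to \Omega_{\mc H/\check{\bb P}^M} \to 0,$$
with $\sigma$ injective as a sheaf map since $\mc H$ is integral. This complex $\mc E := [\mc L^{-1}|_{\mc H} \to p^*\Omega_{X/k}|_{\mc H}]$ fits Bloch's setup: it has virtual rank $n$, and its cokernel is locally free of rank $n$ away from the zero locus of $\sigma$.

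Next, identify the zero locus. Viewing $\sigma$ as a section of the rank-$(n+1)$ bundle $\mc L|_{\mc H} \otimes p^*\Omega_{X/k}|_{\mc H}$, a local computation shows that $\sigma$ vanishes exactly at points $(x,H) \in \mc H$ with $T_x X \subseteq H$, that is, on $\bb P(N) \subset \mc H$. Since $\bb P(N)$ is irreducible of dimension $M-1$, it has codimension $n+1$ in $\mc H$, matching the rank of the bundle. Hence $\sigma$ is a regular section and the standard section-to-Chern-class formula gives
$$c_{n+1,\mc H}^{\bb P(N)}(\mc E) \cap [\mc H] = [\bb P(N)] \in \CH_{M-1}(\bb P(N)),$$
where one uses that for a 2-term complex whose first term is a line bundle, the localized top Chern class coincides with the localized top Chern class of the honest vector bundle $\mc L|_{\mc H} \otimes p^*\Omega_{X/k}|_{\mc H}$ equipped with the section $\sigma$. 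Finally, $\varphi: \bb P(N) \to \Delta_X$ is proper, surjective and generically finite between irreducible varieties of common dimension $M-1$, so $\varphi_*[\bb P(N)] = \deg\varphi \cdot [\Delta_X]$, which gives the stated identity.

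The main technical hurdle is the identification of the localized top Chern class of $\mc E$ with $[\bb P(N)]$: one needs to reconcile Bloch's definition of the localized Chern class of a 2-term complex with the standard section-to-Chern formula for vector bundles, the crucial point being that when the first term is a line bundle these two constructions agree (since Chern roots give $c_{n+1}(F-E)=c_{n+1}(E^\vee\otimes F)$). A secondary subtlety is verifying that the natural scheme structure on $\bb P(N)$ (as the projectivization of the normal bundle over $X$) coincides with the scheme-theoretic zero locus of $\sigma$, which can be done by a local coordinate computation at a point of tangency.
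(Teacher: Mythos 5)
Your argument is correct, but it runs in the opposite logical direction from the paper's. The paper first notes that $\varphi_* c_{n+1,\mc H}^{\bb P(N)}(\Omega_{\mc H/\check{\bb P}^M}) = c\,[\Delta_X]$ for some integer $c$ because $\Delta_X$ is integral, and then determines $c$ by pushing the \emph{non-localized} class $c_{n+1}(\Omega_{\mc H/\check{\bb P}^M})$ all the way down to $\op{Pic}(\check{\bb P}^M)$: the same two-term resolution you write down, combined with the Whitney formula and a binomial expansion, reduces $c$ to $(-1)^{n+1}\int_X c(T_X)/(1+c_1(L))^2$, which is then identified with $\deg\varphi\,\deg\Delta_X$ by Katz's class formula in SGA~7, Expos\'e~XVII. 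You instead prove the localized statement $c_{n+1,\mc H}^{\bb P(N)}(\Omega_{\mc H/\check{\bb P}^M}) = [\bb P(N)]$ directly --- which is precisely the Corollary that the paper deduces \emph{from} the theorem ``for dimension reasons'' --- by viewing the complex as a section of the rank-$(n+1)$ bundle $\mc L\otimes p^*\Omega_X|_{\mc H}$ with zero scheme $\bb P(N)$ of the expected codimension, and then you push forward along the generically finite $\varphi$. Your route buys independence from Katz's computation (in effect you reprove it) and yields the stronger localized identity as the primary statement; what it costs is that the burden shifts to the two comparisons you flag yourself: that Bloch's localized class of $[\mc L^{-1}|_{\mc H}\to p^*\Omega_X|_{\mc H}]$ agrees with Fulton's localized top Chern class of the section (the paper asserts this compatibility in Section~2, and your Chern-root identity $c_{n+1}(F-E)=c_{n+1}(E^\vee\otimes F)$ for $E$ a line bundle is the correct justification), and that the zero scheme of $\sigma$ is $\bb P(N)$ with its natural structure. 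Both are standard, but you should also record explicitly that $\mc H$ is smooth (it is a $\bb P^{M-1}$-bundle over $X$), since that is what makes an expected-codimension zero locus the zero scheme of a regular section, so that the localized top Chern class is the honest fundamental class $[\bb P(N)]$ rather than merely a cycle supported on it.
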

We have the following corollary which is true for dimension reasons.
\begin{Cor} \label{thm:Chern-discriminant} [compare \cite{Kleiman-enumerative}, III.8] Let $X \subseteq \bb P^M$ be such that $\Delta_X$ is a hypersurface. Consider the tautological hyperplane section $\mc H$ over $\check {\bb P}^M$. Then $$c_{n+1, \mc H}^{\bb P(N)}(\Omega_{\mc H/\check {\bb P}^M}) = [\bb P(N)] \in \CH_{M-1}(\bb P(N)) = \bb Z \cdot [\bb P(N)].$$
\end{Cor}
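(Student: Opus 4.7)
My plan is to deduce the corollary directly from the preceding theorem, exploiting the fact that the relevant Chow group is of rank one, which is exactly what the hint ``true for dimension reasons'' is alluding to.

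First I would verify the dimensions. The normal bundle $N = N_{X/\bb P^M}$ has rank $M-n-1$, so the projective bundle $\bb P(N)$ is irreducible of dimension $(n+1) + (M-n-1) - 1 = M-1$. The incidence variety $\mc H \subseteq X \times \check{\bb P}^M$ is a $\bb P^{M-1}$-bundle over $X$, hence has dimension $M+n$ and is smooth; the projection $\mc H \to \check{\bb P}^M$ has relative dimension $n$, so $\Omega_{\mc H / \check{\bb P}^M}$ is locally free of rank $n$ away from the singular locus $\bb P(N)$. Therefore
$$c_{n+1, \mc H}^{\bb P(N)}(\Omega_{\mc H/\check{\bb P}^M}) \in \CH_{\dim \mc H - (n+1)}(\bb P(N)) = \CH_{M-1}(\bb P(N)) = \bb Z \cdot [\bb P(N)],$$
the last equality because $\bb P(N)$ is irreducible of dimension $M-1$. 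Write the class as $k \cdot [\bb P(N)]$ for some integer $k$.

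To pin down $k$, I would push forward along the Gauss morphism $\varphi \colon \bb P(N) \to \Delta_X$. Since $\Delta_X$ is assumed to be a hypersurface in $\check{\bb P}^M$, it too has dimension $M-1$, and $\varphi$ is generically finite of degree $\deg \varphi$; consequently $\varphi_*[\bb P(N)] = \deg \varphi \cdot [\Delta_X]$. Applying the preceding theorem then yields
$$k \cdot \deg \varphi \cdot [\Delta_X] = \varphi_* c_{n+1, \mc H}^{\bb P(N)}(\Omega_{\mc H/\check{\bb P}^M}) = \deg \varphi \cdot [\Delta_X]$$
in $\CH_{M-1}(\Delta_X) = \bb Z \cdot [\Delta_X]$. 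Since $\deg \varphi$ is a nonzero positive integer, this forces $k = 1$.

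There is essentially no obstacle here: the entire geometric content has already been packaged into the preceding theorem, and the corollary amounts to inverting the generically finite pushforward on a cyclic Chow group. The only thing to keep track of is that both $\bb P(N)$ and $\Delta_X$ are irreducible of the same dimension $M-1$, which is what allows the rank-one comparison to determine $k$.
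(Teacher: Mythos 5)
Your proposal is correct and matches the paper's (largely implicit) argument: the paper's phrase ``true for dimension reasons'' refers precisely to the fact that $\CH_{M-1}(\bb P(N))$ and $\CH_{M-1}(\Delta_X)$ are both infinite cyclic on the fundamental classes, so writing the localized Chern class as $k\cdot[\bb P(N)]$ and pushing forward along the generically finite map $\varphi$ gives $k\deg\varphi = \deg\varphi$, hence $k=1$. Your dimension counts ($\dim\mc H = M+n$, $\dim\bb P(N)=M-1$) and the non-circularity of invoking the theorem are all in order.
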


\begin{proof} (of the theorem) The hypersurface $\Delta_X$ is integral so it is obvious that $$\varphi_* c_{n+1, \mc H}^{\bb P(N)}(\Omega_{\mc H/\check{\bb P}^M}) = c [\Delta_X]$$ for some integer $c$. To determine $c$, it suffices to calculate the class of $\varphi_* c_{n+1, \mc H}^{\bb P(N)}(\Omega_{\mc H/\check{\bb P}^M})$ in $\CH_{M-1}(\check {\bb P}^M) = \op{Pic}(\check {\bb P}^M) = \bb Z$, where the map is the natural sending $\Delta_X$ to $\deg \Delta_X$. Denote by $L$ and $L'$ the natural tautological line bundles $\calo(1)$ on $X$ and $\check {\bb P}^M$. Then $\mc H$ is cut out by the section of $L \otimes L'$ on $X \times \check {\bb P}^M$ determined by the dual of $(L \otimes L')^{-1} \to \mc E \otimes \mc E^\vee \to \calo$ where $\mc E = \calo^{M}$. To compute $c$, we simply compute the class $p_* i_* c_{n+1, \mc H}^{\bb P(N)}(\Omega_{\mc H/\check {\bb P}^M}) = p_* c_{n+1}(\Omega_{\mc H/\check {\bb P}^M})$ in $ \op{Pic}(\check {\bb P}^M)$. We use the resolution
$$0 \to {L\otimes L'}|_\mc H^{-1} \to \Omega_{X}|_\mc H \to \Omega_{\mc H/\check {\bb P}^M} \to 0$$
and obtain by the Whitney sum formula:
$$c_{n+1}(\Omega_{\mc H/\check {\bb P}^M})) = \sum c_{n+1-i}(\Omega_X) c_1(LL')^{i} \cap [\mc H].$$
Since $c_1({L \otimes L'}) \cap [X \times \check {\bb P}^M] = [\mc H]$, we have
$$ \sum c_{n+1-i}(\Omega_X) c_1({L \otimes L'})^{i} = \sum c_{n+1-i}(\Omega_X) c_1({L \otimes L'})^{i+1} \cap [X \times \check {\bb P}^M].$$
Binomial expanding $c_1({L \otimes L'})^{i+1} = (c_1(L) + c_1(L'))^{i+1}$ we see that the only terms that will give a contribution after applying pushforward are the terms of the form $c_{n+1-i}(\Omega_X)(i+1)c_1(L)^ic_1(L')$. After rewriting we obtain that the class is given by a generic hyperplane times the number
$$(-1)^{n+1}\int_X \frac{c(T_X)}{(1+c_1(L))^2}$$
where $c(E) = 1 + c_1(E) + c_2(E) + \ldots $ is the total Chern class. This was calculated by Katz in \cite{SGA7-2}, and is equal to (correcting the typo in idem) $\deg \varphi \deg \Delta_X$ (also see \cite{GZK}, Chapter 2, Theorem 3.4, for a more suggestive formulation). We conclude the theorem.
\end{proof}
Notice that for a surface $X$ the Gauss morphism might not be birational. Indeed, by \cite{SGA7-2}, Expos\'e XVII, Proposition 3.5 and Corollaire 3.5.0 the Gauss morphism associated to a smooth surface is birational if and only if it is generically unramified if and only if there is a non-degenerate quadratic singularity in some hyperplane section. An explicit example of a smooth hypersurface whose dual is a hypersurface but the Gauss morphism completely ramified is given by a special Fermat hypersurface (cf. idem. (3.4.2)).\\
Whenever $\Delta_X$ is not a hypersurface, we have the following theorem:
\begin{Prop} Suppose that $m$ is the codimension of $\Delta_X$ in $\check{\bb P}^M$. Then
$$\varphi_* c_{n+m, \mc H}^{\bb P(N)}(\Omega_{\mc H/\check{\bb P}^M}) = c. [\Delta_X]$$
for some constant $c$.
\end{Prop}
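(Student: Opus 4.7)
The plan is to mimic the opening line of the proof of the preceding theorem (the case $m=1$), deducing the statement from a pure dimension count together with the integrality of $\Delta_X$, since the proposition asserts only the existence of $c$ and makes no claim about its value.

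First I would record the relevant dimensions. Since $X \subseteq \bb P^M$ is smooth of dimension $n+1$, the normal bundle $N$ has rank $M-n-1$, so $\bb P(N)$ is irreducible of dimension $M-1$; the tautological hypersurface $\mc H \subseteq X \times \check{\bb P}^M$ is flat of relative dimension $n$ over $\check{\bb P}^M$ and hence of total dimension $M+n$, and its singular locus over $\check{\bb P}^M$ is by construction exactly $\bb P(N)$. Thus $\Omega_{\mc H/\check{\bb P}^M}$ admits the two-term presentation required for the localized Chern class construction and is locally free of rank $n$ outside $\bb P(N)$, so since $n+m>n$ the class $c_{n+m,\mc H}^{\bb P(N)}(\Omega_{\mc H/\check{\bb P}^M})$ is defined; capped with $[\mc H]$ it produces an element of $\CH_{(M+n)-(n+m)}(\bb P(N)) = \CH_{M-m}(\bb P(N))$.

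Next, the Gauss morphism $\varphi$ is proper, and its image $\Delta_X$ is irreducible (as the schematic image of the irreducible variety $\bb P(N)$) of dimension $M-m$ by the codimension hypothesis; equipped with its reduced structure it is an integral closed subscheme of $\check{\bb P}^M$. Hence $\varphi_* c_{n+m,\mc H}^{\bb P(N)}(\Omega_{\mc H/\check{\bb P}^M})$ lies in $\CH_{M-m}(\Delta_X) = \bb Z \cdot [\Delta_X]$, which forces the asserted equality with $c$ the corresponding integer.

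There is no real obstacle: the argument is parallel to the first sentence of the proof in the case $m=1$. The substantive work that the proposition deliberately sidesteps is the identification of $c$. In the case $m=1$ one pushes once more to $\CH_{M-1}(\check{\bb P}^M) \cong \bb Z$ and computes via the Whitney sum formula and Katz's intersection-theoretic expression for the degree of the dual variety. A natural attack on $c$ for general $m$ would be to adapt that computation by pushing to $\CH_{M-m}(\check{\bb P}^M) \cong \bb Z$ and intersecting with an auxiliary generic linear section of $\check{\bb P}^M$ of the appropriate codimension, so as to reduce back to the hypersurface situation.
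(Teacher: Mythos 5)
Your argument is correct and is exactly the one the paper intends: the proposition follows from the same dimension count and integrality of $\Delta_X$ that open the proof of the preceding theorem (the class lands in $\CH_{M-m}(\Delta_X)=\bb Z\cdot[\Delta_X]$ because $\Delta_X$ is the irreducible image of $\bb P(N)$ and has dimension $M-m$). The paper leaves this proof implicit, so your write-up matches its approach; your closing suggestion for computing $c$ by generic linear sections also agrees with the paper's remark following the proposition.
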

It can be verified that $c=1$ in characteristic 0 (cf. \cite{Holme}), and by the same method should be computable as the degree of the Gauss morphism for the duals of some iterated generic hyperplane sections of $X$. \\
\begin{remark}In particular, if $\bb P^1$ is a general line through $P \in \Delta_X$, the above multiplicity is then the definition of the multiplicity of the point $P$ in $\Delta_X$ which was studied over the complex numbers in \cite{Dimca-Milnor}, \cite{Parusinki}, \cite{Nemethi} where formulas in terms of (generalized) Milnor numbers was given, and general formulas in terms of Segre classes was given in \cite{Aluffi}. In characteristic $p$, if we assume that the singularities are isolated, a general line through $P$ will define a smooth total space over a pencil, and the formula of Deligne \cite{SGA7-2}, Expos\'e XVI, Proposition 2.1 can be used together with general geometry of pencils to prove that the multiplicity of the discriminant is the total Milnor number in the sense of idem. The more general result when the total space is regular around the singular fibers is covered by Bloch's conjecture.\end{remark}
\begin{proof} (of Theorem \ref{thm:discriminant-different} and \ref{thm:polynomial-discriminant}) We will suppose we are also working over a field or the integers localized at $p$, $\bb Z_p$, and suppose Proposition \ref{prop:polynomial-discriminant-appendix}. Now, let be $R$ a discrete valuation ring and suppose we are given a morphism $f: \spec R \to \check{\bb P}^M$ such that the generic point doesn't intersect the discriminant locus and the image of the special point is $P$. Consider the closure $j: Z \to \check{\bb P}^M$ of the image of $\Spec R$ in $\check{\bb P}^M$ and suppose it has codimension $p$. Denote by $R'$ the local ring of $Z$ at $P$. By \cite{intersection}, Chapter 20 and \cite{IntviaAdams} Theorem C, we have a commutative intersection product $$\CH^{p}_Z(\check{\bb P}^M) \times \CH^{1}_{\Delta_X}(\check{\bb P}^M) \to \CH^{p+1}_{Z \cap \Delta_X}(\check{\bb P}^M)_{\bb Q}$$ which around $P$ has coefficient being the length $\ell_{R'} (R'/(\Delta))$, where $\Delta$ is some polynomial representative of $\Delta_X$ restricted to $Z$. We also have bivariant Chow groups with the same product
$$A^p_Z(\check{\bb P}^M) \times A^1_{\Delta_X}(\check {\bb P}^M) \to A^{p+1}_{Z \cap \Delta_X}(\check{\bb P}^M).$$
By Poincar\'e duality we have isomorphisms, possibly after tensoring with $\bb Q$, $A^i_Y(X) \simeq \CH_{\dim X - i}(Y) \simeq \CH_Y^i(X).$ By the arguments of \cite{IntviaAdams} 8.4 and \cite{intersection}, Corollary 17.4, these products respect the isomorphisms (here it is important that we are working over a discrete valuation ring to be able to apply loc. cit. 20.2). This in particular proves the commutativity of the product $$Z \cdot \varphi_* c_{n+1, \mc H}^{\bb P(N)}(\Omega_{\mc H/\check{\bb P}^M}) \cap [\mc H] = \varphi_{p^{-1} Z, *} c_{n+1, \mc H}^{\bb P(N)}(\Omega_{\mc H/\check{\bb P}^M}) \cap [p^*Z].$$
General properties of bivariant Chern classes now proves the formula around the local domain of dimension one $R'$. Finally, denote by $\tilde{R}' = \oplus {R'}_w$ the direct sum of the localizations of the normalization of $R'$. Since $R'$ is necessarily excellent, this is finite. One easily finds that $\op{ord}_w(\Delta) = \deg c_{n+1, \mc H_{{R'}_w}}^{\mc H_{k(w)}}(\Omega_{\mc H_{{R'}_w}/R'})$. For some $w$, the induced morphism $\Spec R \to \Spec {R'}_w$ is surjective and thus faithfully flat, and we can pull back the localized Chern class to obtain the searched for formula for the multiplicity of the discriminant in terms of the localized Chern class.



\end{proof}
\section{Families of curves and Deligne's discriminant}

Let $X$ be a smooth projective geometrically integral surface over a field $k$ and let $L$ be a very ample line bundle on $X$ defining an embedding $X \subseteq \bb P^M$. Consider the set $\bb P(N) = \{ (x, H), x \in (X \cap H)_{\op{sing}}$. The projection $f: \mc H \to \check{\bb P}^M$ is now relative curve whose singularities form the space $P(N)$ for $N$ the normal bundle of $X$ in $\bb P^M$ and the image of $P(N)$ in $\check {\bb P}^M$ is the discriminant of the embedding determined by $L$. If $k$ is of characteristic $0$, a theorem of Ein (cf. \cite{Ein}, partially attributed to Landman and Zak) the dual variety of a smooth surface is a hypersurface. The running hypothesis here is that $\Delta_X$ is a hypersurface.\\
 Consider, for a line bundle $A$ on $\mc H$, the isomorphism $$\det Rf_* A^{12} \simeq \langle \omega, \omega \rangle \langle A , A \omega^{-1} \rangle^6$$ over the locus away from the discriminant. Here $\det Rf_*$ denotes the determinant of the cohomology and $\langle A, B \rangle$ denotes the Deligne brackets which can be defined as the line bundle $\det Rf_* ((A - 1)(B-1))$ (cf. \cite{T.Saito-conductor} for a discussion on discriminants and the formalism introduced in \cite{determinant} for the Deligne brackets). The purpose of this section is to provide a natural interpretation of the discriminant as the degeneration of this rational isomorphism over $\check {\bb P}^M$. In \cite{Deligne-Quillen} Deligne calls this the discriminant section, and verifies that it corresponds to the usual discriminant in the case of degree $d$-curves in $\bb P^2$. \\
To calculate the degeneration we can suppose that $k$ is moreover algebraically closed. Let $\deg \Delta_X$ be the degree of $\Delta_X$ in $\check {\bb P}^M$. Then $$\deg \varphi \deg \Delta_X= \deg c_2(X) + 4g_H - 4 + \deg X$$ where $\varphi$ is the Gauss morphism as above. This follows immediately from the "class formula" $$\deg \varphi \deg \Delta_X = \chi(X) - 2 \chi(X \cap H_0) + \chi(X \cap H_0 \cap H_1)$$ in \cite{SGA7-2}, Expos\'e XVII, Proposition 5.7.2, for generic smooth hyperplane sections $H_0$ and $H_1$, and we have  $\chi(X) = \deg c_2(X), \chi(X \cap H_0) = 2 - 2g_H, \chi(X \cap H_0 \cap H_1) = \deg X$. \\
The following appears in \cite{Deligne-Quillen} under the headline "remarque inutile" in the special case of curves of degree $d$ in $\bb P^2$. The general, possibly surprising, result is that in fact the Deligne discriminant is not a discriminant but the power of a discriminant in the general setting. However, if one accepts that the localized Chern class is the discriminant times the degree of the Gauss morphism, this is not surprising since the Deligne-isomorphism specializes to the Riemann-Roch theorem in the Picard group where there is the same $c_2$-term which was calculated in the previous section. Thus this perhaps makes the following proposition into another remarque inutile, but it was also the motivating example behind this note.
\begin{Prop} [Remarque inutile] The Deligne-isomorphism extends to an isomorphism
$$\det Rf_* (A)^{12} \simeq \langle \omega, \omega \rangle \langle A , A \omega^{-1} \rangle^6 \otimes \calo(\deg \varphi \Delta_X)$$
over $\check{\bb P}^M$.
\end{Prop}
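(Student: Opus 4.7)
The plan is to verify that both members of the putative isomorphism are globally defined line bundles on $\check{\bb P}^M$, reduce the comparison to a first Chern class calculation since $\Pic(\check{\bb P}^M) = \bb Z$, and compute the correction via Grothendieck-Riemann-Roch applied to the lci morphism $f \from \mc H \to \check{\bb P}^M$, identifying the output with the localized Chern class from the previous section.

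First I would check that all constituents extend. The morphism $f$ is flat, proper, and a local complete intersection of relative dimension one, since $\mc H \subset X \times \check{\bb P}^M$ is cut out by a single section of $L \otimes L'$. Therefore $\det Rf_*(A)$ is a line bundle on all of $\check{\bb P}^M$, the relative dualizing sheaf $\omega = \omega_{\mc H/\check{\bb P}^M}$ is invertible, and the Deligne brackets $\langle \omega, \omega \rangle$ and $\langle A, A\omega^{-1}\rangle$ are well defined globally. The classical Deligne isomorphism holds on the smooth locus $\check{\bb P}^M \setminus \Delta_X$. Since $\Pic(\check{\bb P}^M) = \bb Z$, the ratio of the two members is of the form $\calo(a \cdot \Delta_X)$ for a unique integer $a$, and producing the extended isomorphism amounts to showing $a = \deg \varphi$.

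Next I would apply Grothendieck-Riemann-Roch to $f$ in its lci formulation. The cotangent complex of $f$ is represented by the two-term complex $\mc E = [\mc L^{-1} \to \Omega_X|_\mc H]$ with $\mc L = (L \otimes L')|_\mc H$, and the virtual relative tangent class in $K$-theory equals $-[\mc E]$. Expanding the degree-two part of $\op{ch}(A)\op{td}(-[\mc E])$ exactly as in the classical derivation of the Deligne formula, one obtains in $\CH^1(\check{\bb P}^M)_{\bb Q}$
\begin{equation*}
12\, c_1(\det Rf_* A) \;=\; c_1(\langle \omega, \omega \rangle) + 6\, c_1(\langle A, A\omega^{-1} \rangle) + f_*\bigl(c_2(\mc E)\bigr),
\end{equation*}
in which the last term is the only one not present in the smooth case, and vanishes on $\check{\bb P}^M \setminus \Delta_X$.

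Finally I would identify $f_*\bigl(c_2(\mc E)\bigr)$ with $\deg \varphi \cdot [\Delta_X]$. By the defining property of the localized Chern class recalled in the previous section, $c_2(\mc E) = i_*\, c_{2, \mc H}^{\bb P(N)}(\Omega_{\mc H/\check{\bb P}^M})$, where $i \from \bb P(N) \hookrightarrow \mc H$ is the inclusion of the singular locus of $f$, and $f \circ i$ factors as $\bb P(N) \xrightarrow{\varphi} \Delta_X \hookrightarrow \check{\bb P}^M$. Corollary \ref{thm:Chern-discriminant}, applied with $n=1$ since $X$ is a surface, identifies the localized Chern class with $[\bb P(N)]$, and its image under $\varphi_*$ is $\deg \varphi \cdot [\Delta_X]$. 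Combining yields $a = \deg\varphi$, as asserted. The main obstacle I foresee is upgrading this equality of first Chern classes to a canonical isomorphism of line bundles extending the Deligne one across $\Delta_X$; this should follow from the Deligne-Knudsen-Mumford refinement of Riemann-Roch to an isomorphism of line bundles for flat proper lci morphisms, together with the observation that the ambiguity in extending a line-bundle isomorphism across $\Delta_X$ is already pinned down to $\calo(a \cdot \Delta_X)$ by the first step.
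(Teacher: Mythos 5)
Your argument is correct in substance, but it is not the route the paper takes. You derive the correction term from the lci form of Grothendieck--Riemann--Roch: the defect of the smooth Deligne isomorphism is $f_*c_2(\mc E)$ for the two-term cotangent resolution, which you then identify with $\deg\varphi\cdot[\Delta_X]$ via the Theorem/Corollary of the preceding section. This is exactly the heuristic the author states just before the proposition (``the Deligne-isomorphism specializes to the Riemann--Roch theorem in the Picard group where there is the same $c_2$-term''), promoted to a proof; it is shorter, handles all $A$ uniformly (making the independence of the multiplicity from $A$ automatic), and makes the link to the localized Chern class conceptually transparent. The cost is that it leans on a nontrivial input --- the functorial Riemann--Roch isomorphism for lci (non-smooth) relative curves with localized $c_2$ defect, i.e.\ T.~Saito's Theorem~3 / the Deligne--Riemann--Roch of \cite{DRR3} --- which the paper only invokes later, in Section~4. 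The paper's own proof is instead a self-contained degree computation: it reduces to $A=\omega$ (the Mumford isomorphism), computes $\deg\det Rf_*\omega$ from the resolution $0\to\omega_X\to\omega_X\otimes\mc L\to\omega_X\otimes\mc L|_{\mc H}\to 0$ and Riemann--Roch for surfaces, computes $\deg\langle\omega,\omega\rangle$ from the Deligne-bracket restriction and pullback formulas, and recognizes the difference as $\deg c_2(X)+4g_H-4+\deg X=\deg\varphi\deg\Delta_X$ via the Noether formula and the class formula of SGA~7; this serves as an independent consistency check rather than a corollary of the $c_2$-computation. Two small points in your write-up: the virtual relative tangent class is $[\mc E]^\vee=[T_X|_{\mc H}]-[\mc L]$, not $-[\mc E]$ (your displayed formula is nonetheless the correct one, since $c_2(\mc E^\vee)=c_2(\mc E)$); and the ``obstacle'' you flag at the end is already resolved by your first step, since once the two sides are known to agree off $\Delta_X$ their ratio is $\calo(a\Delta_X)$ with $a$ pinned down by first Chern classes in the torsion-free group $\Pic(\check{\bb P}^M)=\bb Z$.
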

\begin{proof} It is not difficult to prove that the order of degeneration is independent of the choice of line bundle. To calculate the degeneration we can assume that we really consider the isomorphism $\det Rf_* \omega^{12} \simeq \langle \omega, \omega \rangle$ over the smooth locus (the Mumford isomorphism). It is also clear that the degeneration is of the form $\calo(c \Delta_X)$ for the same reason as in the previous section. We need to calculate $c$, which we will do by calculating the degree of the various line bundles appearing in the isomorphism. Write $\mc L = L \otimes L'$ with notation as in the previous section. In this case we have by adjunction, $K = \omega := \omega_{\mc H/\check{\bb P}^M} = \omega_X \otimes {L \otimes L'}|_{\mc H}$ which admits a resolution
$$0 \to \omega_X \to \omega_X \otimes \mc L \to \omega_X \otimes \mc L|_\mc H \to 0$$ and so $$\det Rf_* \omega = \det Rp_* (\omega_X \otimes \mc L) \otimes \det Rp_* \omega_X^{-1} =   {L'}^{\rk(Rp_* \omega_X \otimes L)} \otimes \hbox{trivial sheaf}$$ and by Riemann-Roch for surfaces and the adjunction formula we have $$\rk(Rp_* ( \omega_X \otimes L)) = \chi(\omega_X \otimes L) = \frac{1}{2}(L+K)L + 1 + p_a = g_H-1 + 1+ p_a.$$
If we have a relative Cartier divisor $D$ (cf. \cite{Elkik}, III.2.6), then $\langle \calo(D), A, B \rangle \simeq \langle A|_D, B|_D \rangle$ for the multiple index Deligne brackets, so we also have $$\langle \omega, \omega \rangle = \langle \mc L, \omega_X \mc L, \omega_X \mc L \rangle_{X \times \check{\bb P}^M/\check{\bb P}^M}.$$ Using the formula $$\langle f^* A, B_1, \ldots, B_n \rangle = A^{B_1 . \ldots . B_n}$$ where the upper indices indicate the intersection number (cf. loc. cit. IV. 2. 1.a) we deduce that the latter line bundle is $L'$ to the power of $L^2 + 2KL + K^2+ 2L(L+K)$ (indicating the individual contributions from each term). This in turn means that the Mumford isomorphism induces an abstract isomorphism
$$\det Rf_* \omega^{12} \otimes \langle \omega, \omega \rangle^{-1} = \calo(\deg \varphi \Delta_X)$$
since the various powers are, by the Noether formula,
$$(6L(K+L) + K^2 + \deg c_2(X))-(L^2 + 2KL + K^2 + 2L(K+L)) $$ $$= \deg c_2(X) + 4g_H - 4 + \deg X  =  \deg \varphi \deg \Delta_X,$$
by the above remarks.
\end{proof}
\section{Case of ternary forms}
In this section, let $X$ be plane model of a projective plane curve over a discrete valuation ring $R$ with spectrum $S= \Spec R$ and perfect residue field. In particular $X$ is the scheme associated to $$F(X,Y,Z) = \sum_{i+j+k=d} a_{ijk}X^iY^jZ^k =0, a_{ijk} \in R.$$ We are interested in the order of the discriminant of this equation, and for the purposes of this section we can suppose $R$ is henselian with algebraically closed residue field. \begin{remark} All the results in this section carry over mutatis mutandis to the case when $X$ is a normal scheme, proper, flat and of local complete intersection over $S$ with geometrically connected general fibers, if we work with Deligne's discriminant. \end{remark}

By \cite{Liu-book}, Corollary 8.3.51, we have a desingularization $\pi: X' \to X$, i.e. $X'$ is regular and $\pi$ is a proper birational and an isomorphism over the regular locus. Both $X$ and $X'$ are local complete intersections so their dualizing sheaves are line bundles, and we write $\omega_{X'/S} = \pi^* \omega_{X/S} + \sum b_i E_i = \pi^* \omega_{X/S} + \Gamma$ for the exceptional divisors $E_i$ so that $\Gamma$ is the discrepancy. Using \cite{SGA7-2}, Expos\'e X, we have an intersection product $\Gamma^2$ and we define $p_g$ to be geometric genus of the singularities defined as the dimension of the $k(s)$-module $R^1 \pi_* \calo_{X'}$. The results in the previous section can then be extended over the integers using the argument of Proposition \ref{prop:polynomial-discriminant-appendix}, and we have:
\begin{T} Suppose $X$ is normal. Then $$\deg c_{2, X}^{X_s}(\Omega_{X/S}) - \deg c_{2, X'}^{{X'}_s}(\Omega_{X'/S}) = 12 p_g + \Gamma^2.$$
\end{T}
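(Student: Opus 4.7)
The plan is to interpolate through the relative cotangent complex $\mathbb L_{X'/X}$ and extract the excess contribution by a Grothendieck--Riemann--Roch computation on the resolution itself.

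Since both $X\to S$ and $X'\to S$ are l.c.i.\ of relative dimension $1$, $\mathbb L_{X/S}$ and $\mathbb L_{X'/S}$ are $2$-term perfect complexes representing $\Omega_{X/S}$ and $\Omega_{X'/S}$, to which the localized Chern class formalism of Section~2 applies. The transitivity triangle $\pi^*\mathbb L_{X/S}\to\mathbb L_{X'/S}\to\mathbb L_{X'/X}$ yields the K-theoretic additivity $[\Omega_{X'/S}]=[\pi^*\Omega_{X/S}]+[\mathbb L_{X'/X}]$, and the Whitney product formula then gives
\[
c_2(\Omega_{X'/S}) \;=\; \pi^*c_2(\Omega_{X/S}) \;+\; \pi^*c_1(\Omega_{X/S})\cdot c_1(\mathbb L_{X'/X}) \;+\; c_2(\mathbb L_{X'/X}).
\]
Since $c_1(\mathbb L_{X'/X})=c_1(\omega_{X'/X})=[\Gamma]$ is $\pi$-exceptional, the projection formula annihilates the cross term after $\pi_*$, and birationality of $\pi$ identifies $\pi_*\pi^* c_2(\Omega_{X/S})$ with $c_2(\Omega_{X/S})$. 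Taking degrees, the theorem reduces to the identity
\[
\deg c_2(\mathbb L_{X'/X}) \;=\; -12\,p_g - \Gamma^2. \qquad (\ast)
\]

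To establish $(\ast)$ I would apply Grothendieck--Riemann--Roch to $\pi$ with coefficients in $\calo_{X'}$. Normality of $X$ gives $\pi_*\calo_{X'}=\calo_X$, and by the very definition of $p_g$ the sheaf $R^1\pi_*\calo_{X'}$ is $0$-dimensional of total length $p_g$, so $\op{ch}(R\pi_*\calo_{X'})=1 - p_g\cdot[\mathrm{pt}]$ in $\CH^*(X)_{\mathbb Q}$. On the right-hand side of GRR, $T_{X'/X}=\mathbb L_{X'/X}^\vee$ has $c_1=-[\Gamma]$ and $c_2=c_2(\mathbb L_{X'/X})$, hence
\[
\op{td}(T_{X'/X}) \;=\; 1 \;-\; \tfrac{1}{2}[\Gamma] \;+\; \tfrac{1}{12}\bigl(\Gamma^2 + c_2(\mathbb L_{X'/X})\bigr) \;+\; \cdots.
\]
After $\pi_*$ the codim-$1$ term dies, and matching the codim-$2$ parts on the two sides forces $-p_g = \tfrac{1}{12}\bigl(\Gamma^2 + \deg c_2(\mathbb L_{X'/X})\bigr)$, which is exactly $(\ast)$.

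The delicate step is the rigorous use of GRR in this mildly singular situation: $X$ is normal l.c.i.\ over $S$ but not regular, and $\pi$ is not automatically a l.c.i.\ morphism, so some care is needed both for perfectness of $\mathbb L_{X'/X}$ and for the form of Riemann--Roch. A cleaner implementation, in the spirit of the rest of the paper, is to apply GRR to the two composite l.c.i.\ morphisms $X'\to S$ and $X\to S$ and take the difference; the ``singularity of $X$'' then enters only through the exceptional divisors and the discrepancy $\Gamma$, and the self-intersection $\Gamma^2$ lives on the regular scheme $X'$ and is computed via the intersection theory of SGA~$7$-II, Exposé X already invoked in the statement. This is the main obstacle; once GRR is in hand, the computation above is straightforward bookkeeping.
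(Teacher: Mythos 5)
Your route --- the transitivity triangle for the cotangent complex, a Whitney expansion, and then Riemann--Roch applied to the resolution --- is genuinely different from the paper's, which invokes T.~Saito's theorem to identify $\deg c_{2,X}^{X_s}(\Omega_{X/S})$ with the order of degeneration of the Mumford isomorphism $\lambda(\calo_X)^{12}\simeq\langle\omega,\omega\rangle$ and then compares the two sides of that isomorphism under $\pi$ separately, extracting $12p_g$ from the cone of $\calo_X\to R\pi_*\calo_{X'}$ and $\Gamma^2$ from the Deligne pairings. But your argument has a genuine gap, and it sits exactly where you wave at a ``delicate step''. The theorem is stated for \emph{localized} Chern classes precisely because the degree of an ordinary $0$-cycle class on $X$ or $X'$ is not a rational-equivalence invariant: $\CH_0(\Spec R)=0$ (the closed point is the divisor of a uniformizer), so a horizontal curve in $X$ carries rational functions whose divisors are nonzero-degree $0$-cycles. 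Hence ``taking degrees'' in your displayed Whitney identity, which is written for non-localized classes in $\CH_0(X')$ and $\CH_0(X)$, does not make sense; the identity must be established for the localized classes with their supports ($X_s$, $X'_s$, the exceptional locus), which requires a Whitney formula for localized classes with varying supports and a compatibility of $c_{2,X}^{X_s}$ with the non-flat morphism $\pi$ --- real lemmas, not formalities. For the same reason your ``cleaner implementation'' is empty: the codimension-$2$ components of both sides of GRR for $X\to S$ and $X'\to S$ live in $\CH_0(S)_{\bb Q}=0$, so their difference carries no information. The refinement that does carry information --- Riemann--Roch localized on the special fibre for the l.c.i.\ morphisms $X/S$ and $X'/S$ --- is precisely T.~Saito's theorem, i.e.\ the paper's main input, so that version of your argument is circular rather than an alternative.

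The direct version, Riemann--Roch for $\pi$ itself, would indeed yield your identity $(\ast)$ by the Todd-class arithmetic (which is correct as bookkeeping), but only if one has a statement of the form $\op{ch}(R\pi_*\calo_{X'})=\pi_*\op{td}(T_{X'/X})$ with both sides supported on the finitely many singular points of $X$, where degrees are defined. That is not off the shelf: $\pi$ is proper birational but not l.c.i.\ (a regular closed subscheme of a singular ambient scheme need not be regularly immersed), so $\bb{L}_{X'/X}$ is only a virtual difference $[\Omega_{X'/S}]-[\pi^*\Omega_{X/S}]$ rather than a two-term complex to which the localized formalism of Section~2 applies, and the l.c.i.\ form of GRR does not cover $\pi$; one would have to pass through singular (Baum--Fulton--MacPherson) Riemann--Roch and then still localize. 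In short, the statement you defer to ``straightforward bookkeeping'' is essentially equivalent to the theorem itself; the paper avoids this by never doing Riemann--Roch for $\pi$, instead comparing $\lambda(\calo)^{12}$ and $\langle\omega,\omega\rangle$ for the two models directly.
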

\begin{proof} The formula in $\cite{T.Saito-conductor}$, Theorem 3 (also see \cite{DRR3}, Theorem 4.11, for the formulation used here), implies that the difference is measured by the difference of the two Mumford "isomorphisms" which we write in the form
$$\lambda(\calo_{X'})^{12} \simeq \langle \omega', \omega' \rangle$$
and
$$\lambda(\calo_{X})^{12} \simeq \langle \omega, \omega \rangle.$$
The difference $\lambda(\calo_{X'})^{12} - \lambda(\calo_X)^{12}$ is calculated by considering the lengths of the cohomology groups of the cone of $\calo_X \to R\pi_* \calo_{X'}$. Since $X$ is normal and integral, by Zariski's main theorem the fibers of $\pi$ are connected so that $\calo_X \simeq \pi_* \calo_{X'}$ so this is just the length of the cohomology groups $R^1 \pi_* \calo_{X'}$ times 12. The contribution to the difference is $12 p_g$.
We now prove that
$$\langle \pi^* \omega, \pi^* \omega \rangle \simeq \langle \omega, \omega \rangle,$$
i.e. that the obvious isomorphism on the generic point extends to a global isomorphism. Indeed, by \cite{determinant}, $\det Rf_* (u \otimes v) \simeq \langle \det u, \det v \rangle$ for virtual bundles $u$ and $v$ of rank 0, and moreover if $u = a \otimes b$ for two virtual bundles of rank 0, then $\det u \simeq \calo$. If $u = v = \omega - 1$, then it is not difficult to see that the difference between the two bundles above is measured by $\det Rf_* (R^1 \pi_* \calo_{X'} \otimes u \otimes v)$, but $R^1\pi_* \calo_{X'}$ is clearly also of rank 0 and we conclude. Now, suppose $D$ is a 1-dimensional Cartier divisor supported on the special fiber $X'_s$, and $L$ is any line bundle on $X'$. Then $\langle D, L \rangle$ has a generic section induced by the trivialization $D_\eta = \emptyset$. We claim the order of this trivialization is the usual intersection number $D.L$ of \cite{SGA7-2}, Expos\'e X. We can suppose that $D$ is integral. In this case it follows from Riemann-Roch for singular curves (cf. \cite{intersection}, Example 18.3.4). We are now ready to calculate the difference $\langle \omega_{X'/S}, \omega_{X'/S} \rangle - \langle \omega_{X/S}, \omega_{X/S} \rangle$. By the above this is $\langle \omega_{X'/S}, \omega_{X'/S} \rangle - \langle \pi^* \omega_{X/S}, \pi^* \omega_{X/S} \rangle = 2 \omega_{X'/S} \Gamma - \Gamma^2 = \Gamma^2$
since $\omega_{X'/S} \Gamma =\pi^* \omega_{X/S} \Gamma + \Gamma^2 = \Gamma^2$ because $\pi^* \omega_{X/S} \Gamma=0 $ by projection formula and due to the fact that $\pi_* \Gamma$ is not a divisor.

\end{proof}
Since now $X'$ is a regular surface over $S$, using Bloch's formula in \cite{blochcycles} we have, since the generic fiber doesn't change, \begin{eqnarray*} \deg c_{2,X'}^{{X'}_s}(\Omega_{X'/S}) & = & -\op{Art}_{X'/S} = \chi(X'_{\overline s}) - \chi(X_{\overline \eta}) + \op{Sw} H^1(X_{\overline \eta}, \bb Q_\ell)  \end{eqnarray*} which gives
\begin{Cor} Under the above hypothesis we have $$\op{ord}_{v} \Delta = \deg c_{2,X}^{{X}_s}(\Omega_{X/S}) = 12 p_g + \Gamma^2  + \chi(X'_{\overline s}) - \chi(X_{\overline \eta})+ \op{Sw} H^1(X_{\overline \eta}, \bb Q_\ell).$$
\end{Cor}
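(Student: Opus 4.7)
The plan is to chain together three previously established results, viewing this corollary essentially as an aggregation rather than a new statement to overcome.

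First, apply Theorem \ref{thm:polynomial-discriminant} to the ternary form $F$ defining $X$: this gives
$$\op{ord}_v \Delta_F = \deg \varphi \cdot \deg c_{2, X}^{X_s}(\Omega_{X/S}).$$
Here the ambient smooth variety in the projective-duality setup is $\bb P^2$ under the $d$-th Veronese embedding, and the hyperplane sections are exactly degree-$d$ plane curves. A generic element of the discriminant locus is a plane curve of degree $d\geq 2$ acquiring a single nondegenerate quadratic node, so the criterion recalled after Theorem 2.1 (derived from \cite{SGA7-2}, Expos\'e XVII) ensures that the Gauss morphism is birational, i.e.\ $\deg \varphi = 1$. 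This gives the first equality $\op{ord}_v \Delta = \deg c_{2, X}^{X_s}(\Omega_{X/S})$.

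Second, I would apply the theorem immediately preceding the corollary, which compares the localized Chern class on $X$ with that on its desingularization $X'$:
$$\deg c_{2, X}^{X_s}(\Omega_{X/S}) = \deg c_{2, X'}^{X'_s}(\Omega_{X'/S}) + 12 p_g + \Gamma^2.$$
Third, on the regular two-dimensional scheme $X'$, invoke Bloch's conjecture as established by Kato--Saito in \cite{katosaito}, which reads (for relative dimension $d=1$)
$$\deg c_{2, X'}^{X'_s}(\Omega_{X'/S}) = -\op{Art}_{X'/S}.$$

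Finally, I would unwind $\op{Art}_{X'/S}$. Since the desingularization $\pi \colon X' \to X$ is an isomorphism over the generic point, the generic fibers coincide: $X'_{\overline \eta} = X_{\overline \eta}$. Because $X_{\overline \eta}$ is a smooth geometrically connected curve, $H^0(X_{\overline \eta}, \bb Q_\ell) = \bb Q_\ell$ and $H^2(X_{\overline \eta}, \bb Q_\ell) = \bb Q_\ell(-1)$ carry unramified Galois actions, hence vanishing Swan conductors, and only the middle cohomology contributes. Therefore
$$\op{Art}_{X'/S} = \chi(X_{\overline \eta}) - \chi(X'_{\overline s}) - \op{Sw} H^1(X_{\overline \eta}, \bb Q_\ell),$$
and assembling the three equalities yields the stated formula. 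The main obstacle is really just the bookkeeping: verifying $\deg \varphi = 1$ in the Veronese-of-$\bb P^2$ setting, and recognizing that for a relative curve the alternating sum of Swan conductors collapses to the single term $-\op{Sw} H^1$. No genuinely new argument is required beyond what has already been assembled in the paper.
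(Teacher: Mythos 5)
Your proof is correct and follows essentially the same route as the paper: the preceding theorem for the $12p_g+\Gamma^2$ correction between $X$ and its desingularization $X'$, then Bloch's conductor formula (Kato--Saito) on the regular model $X'$, with the Swan sum collapsing to $-\op{Sw}H^1$ because $H^0$ and $H^2$ of the smooth generic fiber are unramified. The only difference is that you make explicit the verification that $\deg\varphi=1$ for the Veronese embedding of $\bb P^2$ (via the existence of a nodal degree-$d$ curve), a point the paper leaves implicit; this is a welcome clarification but not a different argument.
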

Finally, we recall a formula of Leufer for the Milnor number of a normal surface singularity (cf. \cite{Laufer}), in the complex setting. For $x \in X$ an isolated singular point, $X$ a normal complex hyperursurface in $\bb C^3$, the Milnor number of $x$ in $X$ is known to be equal to $$\mu_{X, x} = 12 p_{g,x} + {\Gamma_x}^2 - b_1(E_x) + b_2(E_x)$$  where $X' \to X$ is a desingularization of the point $x$, the subscripts denoting the contribution related to each singular point and $b_i(E_x)$ denote the $\ell$-adic Betti numbers of the exceptional set $E_x$ over $x \in X$. We take this as the definition in the general case. Standard rewriting gives $\chi({X'}_{\overline s}) = \chi(\tilde X_{\overline s}) + \sum \chi(E_{x_i}) - \sum B(x)$ where $B(x)$ is the number of (geometric) branches of $x$ in $X_s$ under the map ${X'}_s \to X_s$ and $\tilde X_s$ is the strict transform of $X_s$. The formula $\chi(\tilde X_{\overline s}) - \chi(X_{\overline{s}}) = \sum (B(x) - 1)$ then implies:
\begin{Cor} With the above notation, $$\op{ord}_{v} \Delta = \sum_{x \in X} \mu_{X, x} + \chi(X_{\overline{s}}) - \chi(X_{\overline \eta}) + \op{Sw} H^1(X_{\overline \eta}, \bb Q_\ell).$$
\end{Cor}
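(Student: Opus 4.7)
The plan is to deduce this final corollary from the preceding corollary by straightforward bookkeeping, using Laufer's formula as the definition of $\mu_{X,x}$ together with the two Euler-characteristic identities stated just before the corollary. No new geometric input is needed: everything except the already-proven Bloch formula has been reduced to combinatorics on the exceptional divisor and the strict transform.

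First I would start from the preceding corollary, which gives
$$\op{ord}_{v} \Delta = 12 p_g + \Gamma^2 + \chi(X'_{\overline s}) - \chi(X_{\overline \eta}) + \op{Sw} H^1(X_{\overline \eta}, \bb Q_\ell),$$
and localize the global terms at each singular point. Since $\pi\colon X'\to X$ is an isomorphism outside the finitely many singular points, both $R^1\pi_*\calo_{X'}$ and the exceptional intersection matrix split as a direct sum over the singular locus, so $12 p_g + \Gamma^2 = \sum_{x\in X_{\sing}}(12 p_{g,x}+\Gamma_x^2)$. By the Laufer formula taken as the definition of $\mu_{X,x}$, this rewrites as
$$12 p_g + \Gamma^2 = \sum_{x\in X_{\sing}} \mu_{X,x} + \sum_{x\in X_{\sing}}\bigl(b_1(E_x) - b_2(E_x)\bigr).$$

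Next I would convert the $b_i(E_x)$ terms into Euler characteristics. Each $E_x$ is a proper curve over the residue field, and by Zariski's main theorem (used in the proof of the previous theorem) $\pi$ has connected fibres, so $b_0(E_x)=1$ and
$$b_1(E_x)-b_2(E_x) = 1 - \chi(E_x).$$
Substituting into the displayed equation gives
$$12 p_g + \Gamma^2 = \sum_{x\in X_{\sing}} \mu_{X,x} + \#X_{\sing} - \sum_{x\in X_{\sing}}\chi(E_x).$$
Then I would use the two identities given just before the statement, namely
$$\chi(X'_{\overline s}) = \chi(\tilde X_{\overline s}) + \sum_{x} \chi(E_x) - \sum_x B(x), \qquad \chi(\tilde X_{\overline s}) - \chi(X_{\overline s}) = \sum_x \bigl(B(x)-1\bigr),$$
and add them to obtain
$$\chi(X'_{\overline s}) = \chi(X_{\overline s}) - \#X_{\sing} + \sum_{x\in X_{\sing}} \chi(E_x).$$

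Finally I would add the last two displayed equations: the $\#X_{\sing}$ contributions cancel against each other and the $\sum\chi(E_x)$ contributions cancel as well, leaving
$$12 p_g + \Gamma^2 + \chi(X'_{\overline s}) = \sum_{x\in X_{\sing}} \mu_{X,x} + \chi(X_{\overline s}).$$
Plugging this back into the preceding corollary yields precisely the claimed identity. The only point that requires a little care (rather than being a routine obstacle) is the bookkeeping of the branch numbers $B(x)$ in the two Euler-characteristic identities, but since both identities are asserted in the text the proof reduces to algebraic cancellation.
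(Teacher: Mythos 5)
Your proposal is correct and follows essentially the same route as the paper: start from the preceding corollary, use Laufer's formula (taken as the definition of $\mu_{X,x}$) to trade $12p_{g,x}+\Gamma_x^2$ for $\mu_{X,x}+b_1(E_x)-b_2(E_x)$, and then cancel against the two stated Euler-characteristic identities, using connectedness of the exceptional fibres to get $b_0(E_x)=1$. The paper leaves this cancellation implicit ("then implies"), and your bookkeeping fills it in accurately.
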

The Betti numbers can be computed by a Mayer-Vietoris argument.  We find that $b_1(E) = 2g + b, b_2(E) = r$. Here $g = \sum g(\tilde{C}_i)$ is the sum of the genus of the normalizations of the reduced irreducible components $C_i$, and $b$ is the number of holes in the dual graph of a normal crossings model, defined as the graph whose vertices are the components of $Y$ and we connect two vertices by an edge for each intersection. Finally $r$ is the number of components of $E$. Since blowing up a regular surface in a closed point only produces rational curves and doesn't change the topology of the dual graph and all models in question are related by such a sequence of blowing ups and downs (cf. \cite{lichtenbaum}, Theorem 1.15, p. 392), $b_1(E)$ does not depend on the regular models $X'$ and only on $x \in X$. \\
It should finally be noted that if $X$ has only rational double point singularities then $\deg c_{2, X}^{X_s}(\Omega_{X/S}) = \deg c_{2, X'}^{{X'}_s}(\Omega_{X'/S})$ which is one of the miracles behind the usual formula of Ogg since the minimal Weierstrass equation is the unique Weierstrass model of a planar degree three curve which only has rational double points as singularities. For a true generalization of this formula it would be interesting to understand better the geometry of the semi-stable models of Koll\'ar (see the appendix).

\appendix

\section{Discriminant of homogenous degree $d$ polynomials in $n+1$ variables}
Let $K$ be a local field with respect to a discrete valuation with integer ring $R$. Let $F$ be a homogenous degree $d$ form in $n+1$ variables, defining a hypersurface in $\bb P^n_R$ which is generically smooth. This appendix deals with some general results on the discriminant of such polynomials, and remark a possible connection to minimal models in \cite{Kollar-polynomials}. We however first extend the the constructions and results in the first section over the integers (the same discussion goes through without problem when we consider Segre embedding multiprojective spaces such that the dual is a hypersurface). Consider the discriminant $\Delta_{d,n, \bb Q}$ in $\check {\bb P}^M_\bb Q$, $M = {n + d \choose d} - 1$ parameterizing singular hypersurfaces of degree $d$ in $\bb P^n_\bb Q$ and denote by $\Delta_{d,n}$ the Zariski/flat closure in $\check {\bb P}^M_\bb Z$. We then have the following proposition:
\begin{Prop} \label{prop:polynomial-discriminant-appendix}$[\Delta_{d,n}] = \varphi_* c_{n, \mc H}^{\bb P(N)}(\Omega_{\mc H/\check {\bb P}^M_\bb Z})$
\end{Prop}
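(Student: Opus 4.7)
The plan is to reduce to the theorem opening Section 2 on the generic fiber and then extend the resulting identity to $\Spec \bb Z$ using that the integral pushforward is determined by its restriction to a dense open. First I would apply that theorem to $X = \bb P^n_{\bb Q}$ in its $d$-th Veronese embedding into $\bb P^M_{\bb Q}$ (a smooth, geometrically integral, non-degenerately embedded variety of dimension $n$), yielding
$$\varphi_{\bb Q,\,*}\, c_{n, \mc H}^{\bb P(N)}(\Omega_{\mc H/\check{\bb P}^M_{\bb Q}}) \;=\; \deg \varphi \cdot [\Delta_{d,n,\bb Q}]$$
in $\CH_{M-1}(\Delta_{d,n,\bb Q})$ (the Chern class index equals the dimension of $X$). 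I would then verify $\deg \varphi = 1$ via biduality in characteristic zero: the Gauss morphism of a smooth projective variety whose dual is a hypersurface is birational onto its image. Equivalently, the Chern-class integral $(-1)^{n+1}\int_{\bb P^n} c(T_{\bb P^n})(1+dh)^{-2}$ carried out inside the proof of that theorem reproduces Boole's classical degree $(n+1)(d-1)^n$ of $\Delta_{d,n,\bb Q}$, which forces $\deg \varphi = 1$.

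Next I would extend to $\Spec \bb Z$. All data---the Veronese embedding of $\bb P^n_{\bb Z}$, the tautological hypersurface $\mc H$, the projective bundle $\bb P(N)$ of singular points, and the Gauss morphism $\varphi$---are defined integrally. Since $\mc H \hookrightarrow \bb P^n_{\bb Z} \times_{\bb Z} \check{\bb P}^M_{\bb Z}$ is a relative Cartier divisor in a smooth $\bb Z$-scheme, $\Omega_{\mc H/\check{\bb P}^M_{\bb Z}}$ admits a two-term locally free resolution, so the localized Chern class $c_{n, \mc H}^{\bb P(N)}(\Omega_{\mc H/\check{\bb P}^M_{\bb Z}}) \cap [\mc H]$ is well defined in $\CH_M(\bb P(N))$ by the construction recalled at the start of Section 2. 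Its pushforward under $\varphi$ is supported on the scheme-theoretic image of the integral scheme $\bb P(N)_{\bb Z}$, and since this image has generic fiber $\Delta_{d,n,\bb Q}$, it coincides with the flat closure $\Delta_{d,n}$. As $\Delta_{d,n}$ is integral of dimension $M$, the pushforward must be of the form $c \cdot [\Delta_{d,n}]$ for a single integer $c$.

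Finally I would pin down $c$ by restricting along the flat open immersion $\check{\bb P}^M_{\bb Q} \hookrightarrow \check{\bb P}^M_{\bb Z}$: flat base-change compatibility of the localized Chern class reduces the restriction to the identity of the first step, giving $c = \deg \varphi = 1$. The obstacles I anticipate are (a) checking that the scheme-theoretic image of $\bb P(N)_{\bb Z}$ picks up no extra components, embedded or vertical, beyond $\Delta_{d,n}$, and (b) the flat base-change property of the localized Chern class; both should be routine consequences of the bivariant machinery already invoked in the proofs of Theorems \ref{thm:discriminant-different} and \ref{thm:polynomial-discriminant}. The biduality input, while conceptually the key geometric content, is entirely classical in the Veronese setting.
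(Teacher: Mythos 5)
Your proposal is correct, but the mechanism you use to rule out vertical components is genuinely different from the paper's. The paper likewise first invokes the field case, and is then left with showing that $\varphi_* c_{n,\mc H}^{\bb P(N)}(\Omega_{\mc H/\check{\bb P}^M_{\bb Z}})$ differs from the flat closure $[\Delta_{d,n}]$ by no multiples of the vertical divisors $[\check{\bb P}^M_{\bb F_p}]$; it does this by a test-section argument: for each $p$ it picks an $\overline{\bb F}_p$-point outside $\Delta_{d,n,\bb F_p}$, lifts it to an $\calo_K$-point for an unramified extension $K/\bb Q_p$, and observes that both the order of the discriminant and the degree of the localized Chern class vanish along the resulting smooth family, forcing each vertical coefficient to be zero. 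You instead argue a priori on supports: the localized Chern class lies in $\CH_M$ of the non-smooth locus $\bb P(N)_{\bb Z}$, which is an integral $\bb P^{M-n-1}$-bundle over $\bb P^n_{\bb Z}$ (the linear conditions $F(x)=\partial_0F(x)=\cdots=\partial_nF(x)=0$ have constant rank $n+1$ over $\bb Z$ because of the Euler relation, which is the content of your anticipated obstacle (a)), so the top Chow group of its image is $\bb Z\cdot[\Delta_{d,n}]$ and no vertical component can appear; the constant is then read off from the generic fibre by flat base change. Both routes work; yours is arguably cleaner, and it also sidesteps an apparent circularity in the paper, whose proof of the Proposition cites Theorem \ref{thm:discriminant-different} while the proof of that theorem assumes the Proposition --- you correctly go back to the field-case theorem of Section 2 instead. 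You moreover supply the justification, left implicit in the paper, that $\deg\varphi=1$ for the Veronese embedding in characteristic zero (reflexivity). One small slip: for $X=\bb P^n$ of dimension $n$ the Chern-class integral carries the sign $(-1)^n$, not $(-1)^{n+1}$; the exponent in the paper's formula is the dimension of $X$, which there is $n+1$.
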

\begin{proof} The fiber over $\bb F_p$ of $\Delta_{d,n}$ is the usual discriminant over the field $\bb F_p$. Since the localized Chern class is equal to the discriminant over $\bb Q$ by Theorem \ref{thm:discriminant-different} we have to verify that there are no vertical components appearing in the localized Chern class. This is a local question which we can verify by calculating both sides along certain special discrete valuation rings, and we can suppose that we are working over the $p$-adic integers $\bb Z_p$. But this is clear, since $\Delta_{d,n,\bb F_p}$ is not Zariski-dense in $\check {\bb P}^M$ we can find a $\overline{\bb F}_p$-point $s$ in $\check {\bb P}^M_\bb {F_p} \setminus \Delta_{d,n,\bb F_p}$ and thus also an unramified extension $K/\bb Q_p$ with integer ring $\calo_K$ with a $K$-point specializing to $s$. This corresponds to a smooth hypersurface of degree $d$ over $\calo_K$. We can evaluate the order of the discriminant and the degree of the localized Chern class on this $\calo_K$-point, and they are both zero. This proves the assertion.
\end{proof}
Thus the discriminant $\Delta_{d,n}$ is an integral polynomial defined up to sign and is controlled by the localized Chern class, and defines a flat scheme over $\bb Z$. The natural equivalence between models is projective equivalence: 
\begin{TL} Let $K$ be a field and suppose that $H = \{F =0 \}$ and $H' = \{F' = 0\}$ define $K$-isomorphic hypersurfaces in $\bb P^n$ which preserve a hyperplane section. Then there is a matrix $A \in GL_{n+1}(K)$ such that the induced $PGL_{n+1}(K)$-action on $\bb P^n$ carries $F$ to $F'$. 
\end{TL}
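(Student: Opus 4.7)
The plan is to show that the isomorphism $\varphi\from H \xrightarrow{\sim} H'$ is induced by an element of $GL_{n+1}(K)$ acting on the ambient $\bb P^n$. The key observation is that for a hypersurface of degree $d \geq 2$ the embedding into $\bb P^n$ is intrinsically recovered from the line bundle $\mc O_H(1) = \mc O_{\bb P^n}(1)\big|_H$. I interpret the hypothesis ``preserves a hyperplane section'' as the assertion $\varphi^* \mc O_{H'}(1) \cong \mc O_H(1)$, and fix such an isomorphism.

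From the exact sequence
$$0 \to \mc O_{\bb P^n}(1-d) \to \mc O_{\bb P^n}(1) \to \mc O_H(1) \to 0,$$
and using $H^0(\bb P^n, \mc O(1-d)) = 0$ (which holds as $d \geq 2$) together with $H^1(\bb P^n, \mc O(1-d)) = 0$ (as $n \geq 2$), the restriction map yields a canonical isomorphism $H^0(\bb P^n, \mc O(1)) \xrightarrow{\sim} H^0(H, \mc O_H(1))$, and similarly for $H'$. Both sides are thereby identified with the $(n+1)$-dimensional space $V^*$ of linear forms on $\bb P^n$.

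The induced pullback $\varphi^*\from H^0(H', \mc O_{H'}(1)) \to H^0(H, \mc O_H(1))$ then becomes a $K$-linear automorphism of $V^*$, i.e., an element $A \in GL_{n+1}(K)$. The associated projective linear map $\tilde A\from \bb P^n \to \bb P^n$ restricts to $\varphi$ on $H$, since both can be read off as the morphism to $\bb P^n$ determined by the complete linear system $|\mc O_H(1)|$ (respectively $|\mc O_{H'}(1)|$) relative to compatible bases. Consequently $\tilde A(H) = H'$ scheme-theoretically, and since a homogeneous degree-$d$ form is determined up to a nonzero scalar by its zero scheme, $\tilde A$ transports $F$ to a scalar multiple of $F'$, which is the required conclusion.

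The main obstacle is verifying rigorously that the abstract linear isomorphism $A$ really encodes $\varphi$ geometrically, i.e., that the morphism $H \to \bb P^n$ recovered from the complete linear system $|\mc O_H(1)|$ coincides with the given inclusion, and that $\varphi$ intertwines the two such constructions for $H$ and $H'$. This is a standard functoriality of complete linear systems, but it is the step requiring the most bookkeeping.
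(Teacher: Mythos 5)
Your proof is correct and follows essentially the same route as the paper: the twisted ideal-sheaf sequence, the vanishing of $H^0$ and $H^1$ of $\mathcal{O}(1-d)$ to identify $H^0(\mathbb{P}^n,\mathcal{O}(1))$ with $H^0(H,\mathcal{O}_H(1))$, and the transported automorphism of linear forms as the desired matrix. Your final paragraph merely spells out a functoriality check that the paper leaves implicit.
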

\begin{proof} We can suppose that $d \geq 2$. We have an exact sequence
$$0 \to \calo(-d+1) \to \calo_{\bb P^n}(1) \to \calo_H(1) \to 0$$
which by the vanishing $H^0(\bb P^n, \calo(-d+1))= H^1(\bb P^n, \calo(-d+1)=0$ induces an isomorphism $$H^0(\bb P^n, \calo(1)) = H^0(H, \calo_H(1)).$$
By assumption of preserving a hyperplane we have an induced isomorphism $$H^0(H, \calo_H(1)) \simeq H^0(H', \calo_{H'}(1))$$ which then induces an automorphism of $H^0(\bb P^n, \calo(1))$. This is the sought after matrix.
\end{proof}
Given such a matrix, it acts on the cone $H^0(\check{\bb P}^n, \calo(d)) = Sym^d H^0(\check{\bb P}^n, \calo(1))$ over $\check{\bb P}^M$ but leaves the discriminant invariant. The discriminant is a hypersurface, given by $\Delta = 0$, and given a matrix $A$ acting on $\bb P^n$ we have $\Delta_A = m_A \Delta$ for some constant $m_A$. Moreover, $m_{A B} = m_A m_B$, so that $m = (\det A)^k$ for some $k$ and $k = d(d-1)^n$ (cf. \cite{Kollar-polynomials}, Lemma 4.5, using that the degree of this discriminant is $(n+1)(d-1)^n$, cf. \cite{GZK}, Example 4.15). For $d=3$ and $n=2$ this corresponds to the usual transformation rule for the discriminant of a Weierstrass equation. This discriminant can be computed as the resultant of the derivatives times a normalizing factor, $d^{((-1)^{n+1} - (d-1)^{n+1})/d}$ (cf. \cite{GZK}, Chapter 13, Proposition 1.7). The article \cite{Kollar-polynomials} approaches the question of finding "minimal models" of equations over discrete valuation rings, from the point of view of geometric invariant theory. These models are minimal, amongst other things, in the sense that the discriminant is minimal. It would be interesting to understand better the geometry of such minimal models.

\providecommand{\bysame}{\leavevmode\hbox to3em{\hrulefill}\thinspace}
\providecommand{\MR}{\relax\ifhmode\unskip\space\fi MR }
\providecommand{\MRhref}[2]{%
  \href{http://www.ams.org/mathscinet-getitem?mr=#1}{#2}
}
\providecommand{\href}[2]{#2}

\end{document}